\documentclass{amsart}
\usepackage{latexsym, amssymb, amsmath}
\def\R{\mathbb R}

\def\CP{\mathbb {CP}}

\newtheorem{thm}{Theorem}[section]
\newtheorem*{thmm}{Theorem~RSK}
\newtheorem*{thmmm}{Uncertainty Principle Lemma} 
\newtheorem*{thmmmm}{Hardy's Inequality} 
\newtheorem{lemm}[thm]{Lemma}

\newtheorem{prop}[thm]{Proposition}
\theoremstyle{remark}
\newtheorem{rmk}[thm]{\bf Remark}
\theoremstyle{definition}

\title{The uncertainty principle lemma under gravity\\ and\\ the discrete spectrum of Schr\"odinger operators} 

\author{Kazuo Akutagawa${}^*$}
\email{akutagawa$_{-}$kazuo@ma.noda.tus.ac.jp}
\address{Department of Mathematics, Tokyo University of Science, 
Noda 278-8510, Japan}

\author{Hironori Kumura${}^{\dagger}$}
\email{smhkumu@ipc.shizuoka.ac.jp} 
\address{Department of Mathematics, Shizuoka University, 
Shizuoka 422-8529, Japan} 

\thanks{${}^*$\ 
supported in part by the Grants-in-Aid for Scientific Research (C), 
Japan Society for the Promotion of Science, No.~18540098.\\ 
\quad\ ${}^{\dagger}$\ 
supported in part by the Grants-in-Aid for Scientific Research (C), 
Japan Society for the Promotion of Science, No.~18540212.} 

\date{January~8, 2009.}
\pagestyle{headings}

\begin{document} 
\maketitle
\markboth{The uncertainty principle lemma under gravity}
{Kazuo Akutagawa and Hironori Kumura} 

\begin{abstract}
The {\it uncertainty principle lemma} for the Laplacian $\Delta$ on $\mathbb{R}^n$ shows 
the borderline-behavior of a potential $V$ for the following question~:~whether 
the Schr\"odinger operator $- \Delta + V$ has a finite or infinite number of the discrete spectrum. 
In this paper, we will give a generalization of this lemma on $\mathbb{R}^n$ 
to that on large classes of complete noncompact manifolds. 
Replacing $\mathbb{R}^n$ by some specific classes of complete noncompact manifolds, 
including hyperbolic spaces, 
we also establish some criterions for the above-type question. 
\end{abstract}
\maketitle 
\quad \\ 
  
\section{Introduction and Main Results} 

Let us start with the following crucial result:  
\begin{thmm}[{\rm Reed-Simon}~\cite{Re-Si-book-4}, {\rm Kirsch-Simon}~\cite{Ki-Si}]\label{Thm-RSK}\ \ 
Let $- \Delta + V$ be the Schr\"odinger operator with a potential $V \in C^0(\mathbb{R}^n)$ 
on $L^2(\mathbb{R}^n)$, where $n \geq 3$. 
Assume that $(- \Delta + V)|_{C^{\infty}_c(\mathbb{R}^n)}$ is essentially self-adjoint 
and that $\sigma_{{\rm ess}}(- \Delta + V) = [0, \infty)$, 
where $\sigma_{{\rm ess}}(- \Delta + V)$ denotes the essential spectrum of $- \Delta + V$. \\ 
$({\rm i})$\quad Assume that, there exists $R_0 > 0$ such that $V$ satisfies 
$$ 
V(x) \geq - \frac{(n-2)^2}{4r^2}\qquad {\rm for}\quad r := |x| \geq R_0. 
$$ 
Then, the set $\sigma_{{\rm disc}}(- \Delta + V)$ of the discrete spectrum is finite. \\ 
$({\rm ii})$\quad Assume that, there exist $\delta > 0$ and $R_1 > 0$ such that $V$ satisfies 
$$ 
V(x) \leq - (1+\delta)\frac{(n-2)^2}{4r^2}\qquad {\rm for}\quad r \geq R_1. 
$$ 
Then, $\sigma_{{\rm disc}}(- \Delta + V)$ is infinite.  
\end{thmm}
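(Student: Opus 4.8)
The plan is to reduce both statements to the variational (min--max) characterization of the negative spectrum. Since $(-\Delta+V)|_{C^\infty_c(\R^n)}$ is essentially self-adjoint and $\sigma_{{\rm ess}}(-\Delta+V)=[0,\infty)$, everything below $0$ is discrete, so $\sigma_{{\rm disc}}(-\Delta+V)$ is finite (resp.\ infinite) precisely when the quadratic form
\[
Q(u):=\int_{\R^n}\bigl(|\nabla u|^2+V\,u^2\bigr)\,dx
\]
is nonnegative on the orthogonal complement of some finite-dimensional subspace (resp.\ is negative on an infinite-dimensional subspace of the form domain). The whole dichotomy is governed by the sharp Hardy inequality on $\R^n$ ($n\ge 3$),
\[
\int_{\R^n}|\nabla u|^2\,dx\ \ge\ \frac{(n-2)^2}{4}\int_{\R^n}\frac{u^2}{r^2}\,dx,\qquad u\in C^\infty_c(\R^n),
\]
whose optimal constant $(n-2)^2/4$ is exactly the threshold in the statement; the two hypotheses say that $V$ sits, asymptotically, just above or just below this critical Hardy potential.

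For $({\rm i})$ I would use an IMS-type localization. Fix cutoffs $\phi,\psi\in C^\infty(\R^n)$ with $\phi^2+\psi^2\equiv 1$, $\phi\equiv 1$ on $B_{R_0}$ and $\psi\equiv 1$ outside $B_{2R_0}$, so that at the level of forms
\[
Q(u)=Q(\phi u)+Q(\psi u)-\int_{\R^n}\bigl(|\nabla\phi|^2+|\nabla\psi|^2\bigr)u^2\,dx.
\]
Since $\psi u$ is supported in $\{r\ge R_0\}$, where $V\ge-(n-2)^2/(4r^2)$, Hardy's inequality gives $Q(\psi u)\ge 0$ for every $u$. The remaining piece $Q(\phi u)$ together with the gradient error is a form on functions supported in the fixed ball $B_{2R_0}$, where $V$ is continuous hence bounded and the gradients are compactly supported; this interior operator is bounded below and has compact resolvent (Rellich), so it carries only finitely many eigenvalues below any negative level. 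Hence $Q$ is nonnegative off a finite-dimensional subspace, and $\sigma_{{\rm disc}}$ is finite.

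For $({\rm ii})$ I would build infinitely many trial functions with pairwise disjoint supports on which $Q<0$. Passing to radial functions $u(r)=r^{-(n-2)/2}w(r)$ and then to the logarithmic variable $t=\log r$, a direct computation (the boundary term vanishes for compactly supported $w$) yields, with $\widetilde{w}(t)=w(e^t)$,
\[
\int_{\R^n}|\nabla u|^2\,dx-\frac{(n-2)^2}{4}\int_{\R^n}\frac{u^2}{r^2}\,dx=\omega_{n-1}\int_{-\infty}^{\infty}\bigl(\widetilde{w}'(t)\bigr)^2\,dt,
\]
while $\int_{\R^n}u^2/r^2\,dx=\omega_{n-1}\int_{-\infty}^\infty\widetilde{w}(t)^2\,dt$. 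Using the pointwise bound $V\le-(1+\delta)(n-2)^2/(4r^2)$ on $\{r\ge R_1\}$, one obtains for such $u$
\[
Q(u)\ \le\ \omega_{n-1}\Bigl[\int_{-\infty}^\infty\bigl(\widetilde{w}'\bigr)^2\,dt-\delta\,\tfrac{(n-2)^2}{4}\int_{-\infty}^\infty\widetilde{w}^{\,2}\,dt\Bigr].
\]
The bracket is made negative by taking $\widetilde{w}$ supported on an interval of fixed large length $L$, since the first Dirichlet eigenvalue $\pi^2/L^2$ of that interval falls below $\delta(n-2)^2/4$. Translating this profile to disjoint intervals $[t_k,t_k+L]$ with $t_k\to\infty$ produces trial functions on disjoint annuli contained in $\{r\ge R_1\}$; by disjointness all cross terms vanish, so $Q<0$ on their infinite-dimensional span, and $\sigma_{{\rm disc}}$ is infinite.

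The main obstacle, and the point requiring the most care, is $({\rm i})$: one must confirm that the localized interior operator genuinely has discrete spectrum below $0$ so that only finitely many negative eigenvalues can arise there, that the IMS gradient errors are controlled uniformly on the fixed annulus, and that Hardy's inequality is applied only to functions legitimately supported away from the origin. Making the min--max count fully rigorous --- exhibiting the finite-dimensional ``bad'' subspace explicitly --- is where the analytic bookkeeping concentrates.
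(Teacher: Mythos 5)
Your part (ii) is essentially correct and, modulo a change of variables, is the paper's own construction: the authors conjugate by the unitary $U:\psi\mapsto r^{(n-1)/2}\psi$ and use trial functions $\phi=\chi(r)\,r^{-(n-2)/2}$ with a long plateau $[2R,kR]$, which is exactly your family $u=r^{-(n-2)/2}w$ written in the radial variable rather than in $t=\log r$; both produce disjointly supported trial functions with negative form and conclude by min--max. The genuine problem is part (i), which is precisely the borderline case, and there your IMS argument contains a bookkeeping error that is fatal at the critical constant. In the identity $Q(u)=Q(\phi u)+Q(\psi u)-\int(|\nabla\phi|^2+|\nabla\psi|^2)u^2\,dx$ the localization error cannot be attached wholesale to the interior piece: on the transition annulus $u^2=(\phi u)^2+(\psi u)^2$, so with $W:=|\nabla\phi|^2+|\nabla\psi|^2$ the decomposition really reads $Q(u)=q(\phi u)+q(\psi u)$ where $q(v)=\int\bigl(|\nabla v|^2+(V-W)v^2\bigr)dx$. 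For the exterior piece you then need $q(\psi u)\ge 0$, but the hypothesis $V\ge -(n-2)^2/(4r^2)$ has \emph{zero} slack and Hardy's inequality is saturated: $\int|\nabla v|^2-\frac{(n-2)^2}{4}\int v^2/r^2\ge 0$ leaves nothing with which to absorb $-\int Wv^2$, and $W$ is strictly positive somewhere on the annulus no matter how the cutoffs are chosen (writing $\phi=\cos\theta$, $\psi=\sin\theta$, one has $W=|\nabla\theta|^2$ with $\theta$ running from $0$ to $\pi/2$). Conversely, if you keep the error with the interior piece as you wrote it, the object $Q(\phi u)-\int Wu^2$ is not a form to which Rellich applies: it controls $\nabla(\phi u)$ but not $\nabla u$ where $\phi<1$, and it degenerates entirely where $\phi=0$, so ``compact resolvent, hence finitely many negative eigenvalues'' does not follow. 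Rescuing the IMS route at exact criticality requires an extra quantitative input (an improved Hardy inequality with logarithmic remainder for functions vanishing at $r=R_0$, or a Bargmann-type bound in each spherical-harmonic sector for the compactly supported well $W$), none of which is in your sketch; with a slack hypothesis $V\ge-(1-\delta_0)\frac{(n-2)^2}{4r^2}$, as in the paper's Theorem 1.5, your scheme would work.

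The paper sidesteps the difficulty at exactly this point: instead of a smooth partition of unity it uses Neumann bracketing (domain monotonicity of eigenvalues with vanishing Neumann data), decomposing $\mathbb{R}^n=\overline{B_{R_0}({\bf 0})}\cup\bigl(\mathbb{R}^n-B_{R_0}({\bf 0})\bigr)$, which produces \emph{no} localization error: one gets $\mu_i\le\lambda_i$ directly. The price is that the exterior Neumann form is tested against functions that need not vanish on $\partial B_{R_0}({\bf 0})$, so the classical Hardy inequality for $C_c^\infty(\mathbb{R}^n)$ does not apply there; this is exactly what the paper's inequality (2) supplies --- a Hardy-type bound with the boundary term $\frac{1}{2}\int_{\partial B_{R_0}}\bigl(\Delta r-\frac{1}{R_0}\bigr)u^2\,d\sigma=\frac{n-2}{2R_0}\int_{\partial B_{R_0}}u^2\,d\sigma\ge 0$, which can be discarded. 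Hence the exterior Neumann problem has no negative spectrum even at the critical constant, the Neumann operator on the compact ball has finitely many negative eigenvalues, and finiteness follows. As written, your proof of (i) does not close; you should either adopt the Neumann bracketing or supply the missing improvement of Hardy's inequality to absorb the IMS error.
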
 

Here, we recall the following fundamental two facts: 
First, $\sigma_{\rm ess}(- \Delta) = [0, \infty)$.   
Second, if $V \in C^0(\mathbb{R}^n)$ and 
$V(x) \rightarrow 0$ as $|x| \rightarrow \infty$, 
then $(- \Delta + V)|_{C^{\infty}_c(\mathbb{R}^n)}$ is essentially self-adjoint 
and $\sigma_{\textrm{ess}}(- \Delta + V) = [0, \infty)$ (cf.~\cite{Gu-Si-book}).  
Hence, the assumptions for the potential $V$ in Theorem~RSK are reasonable 
if one has an interest in only the asymptotic behavior of $V$ near infinity 
for the question whether $- \Delta + V$ has a finite or infinite number of the discrete spectrum. 
With these understandings, Theorem~RSK gives a complete answer to this question. 
In order both to know the borderline-behavior of $V$ 
and to prove the finiteness of $\sigma_{{\rm disc}}(- \Delta + V)$, 
the {\it Uncertainty Principle Lemma} below is crucial, 
which is heavily related to the {\it Heisenberg Uncertainty Principle} (cf.~\cite{Re-Si-book-2, Gu-Si-book}). 

\begin{thmmm}\label{Thm-UPL}\ \ 
When $n \geq 3$, the following inequality holds $:$ 
\begin{equation}  
\int_{\mathbb{R}^n} |\nabla u|^2 dx \geq 
\frac{(n-2)^2}{4} \int_{\mathbb{R}^n} \frac{u^2}{r^2} dx\qquad 
{\rm for}\quad u \in C_c^{\infty}(\mathbb{R}^n), 
\end{equation}  
where $\nabla u$ denotes the gradient of $u$. 
\end{thmmm} 

In this paper, we will study the finiteness and infiniteness of the discrete spectrum of a Schr\"odinger operator 
{\it under gravity}, that is, 
a Schr\"odinger operator $- \Delta_g + V$ on a complete noncompact $n$-manifold $(M, g)$. 
Here, $- \Delta_g$ denotes the Laplace-Beltrami operator with respect to $g$ on $C^{\infty}(M)$ and $V \in C^0(M)$. 
Throughout this paper, without particular mention, we always assume that $V \in C^0(M)$ 
and that $(- \Delta_g + V)|_{C^{\infty}_c(M)}$ is essentially self-adjoint 
for the sake of simplicity. 
For our purpose, we first prove the following {\it uncertainty principle lemma under gravity}, 
that is, the one on a complete noncompact manifold with ends of a specific type. 
\begin{thm}[{\bf Uncertainty Principle Lemma under Gravity}]\label{Thm-UPLG}\ \ 
Let $(M, g)$ be a complete noncompact Riemannian $n$-manifold, where $n \geq 2$. 
Assume that one of ends of $M$, denoted by $E$, has a compact connected $C^{\infty}$ boundary $W := \partial E$ 
such that the outward normal exponential map 
$\exp_W : \mathcal{N}^+(W) \rightarrow E$ 
is a diffeomorphism $($see ${\rm Fig.~1})$, 
where 
$$ 
\mathcal{N}^+(W) := \big{\{} v \in TM|_W \ \big{|}\ v \ {\rm is\ outward\ normal\ to}\ W \big{\}}. 
$$ 
Assume also that 
the mean curvature $H_W$ of $W$ with respect to the inward unit normal vector 
is positive. 
Take a positive constant $R > 0$ satisfying 
$$ 
H_W \geq \frac{1}{R}\qquad {\rm on}\quad W,
$$ 
and set 
$$
\rho(x) := {\rm dist}_g(x, W),\quad r(x) := \rho(x) + R\qquad {\rm for}\quad x \in E.
$$   
Then, for all $u \in C_c^{\infty}(M)$, we have 
\begin{align} 
& \int_E |\nabla u|^2 dv_g \\ 
\geq \int_E \Big{\{} \frac{1}{4 r^2}& + \frac{1}{4}(\Delta_g r)^2 - \frac{1}{2}|\nabla d r|^2 
- \frac{1}{2} {\rm Ric}_g(\nabla r, \nabla r) \Big{\}} u^2 dv_g 
+ \frac{1}{2} \int_W \Big{(} \Delta_g r - \frac{1}{R} \Big{)} u^2 d\sigma_g \notag \\ 
\geq \int_E \Big{\{} \frac{1}{4 r^2}& + \frac{1}{4}(\Delta_g r)^2 - \frac{1}{2}|\nabla d r|^2 
- \frac{1}{2} {\rm Ric}_g(\nabla r, \nabla r) \Big{\}} u^2 dv_g, \notag 
\end{align} 
where ${\rm Ric}_g$, $dv_g$ and $d\sigma_g$ denote respectively the Ricci curvature, 
the Riemannian volume measure of $g$ and the $(n-1)$-dimensional Riemannian volume measure of $(W, g|_W)$. 
In particular, if $(M, g)$ has a pole $p_0 \in M$ $(${\rm cf.}~\cite{Gr-Wu-book}$)$, then 
\begin{equation} 
\int_M |\nabla u|^2 dv_g 
\geq \int_M \Big{\{} \frac{1}{4r^2} + \frac{1}{4}(\Delta_g r)^2 - \frac{1}{2}|\nabla dr|^2 
- \frac{1}{2} {\rm Ric}_g(\nabla r, \nabla r) \Big{\}} u^2 dv_g, 
\end{equation} 
where $r(x) := {\rm dist}_g(x, p_0)$ for $x \in M$. 
\end{thm}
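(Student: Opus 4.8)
The plan is to reduce everything to a pointwise integration-by-parts computation on the end $E$, using the distance function $r$ as the carrier of the curvature information. The starting observation is that the classical proof of the Uncertainty Principle Lemma on $\R^n$ works by completing a square: one writes $\int |\nabla u|^2 \geq \int |\nabla u - (\text{something})\cdot u\,\nabla r/|\nabla r|^2|^2\cdot(\dots)$ and expands. On $\R^n$ with $r=|x|$ the natural multiplier is $\tfrac{n-2}{2r}$; here the manifold is curved, so the correct multiplier will instead be built out of $\Delta_g r$, which on $\R^n$ equals $(n-1)/r$ and carries the geometry of the level sets of $r$.

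Concretely, I would set a vector field $X := \nabla r$ (note $|\nabla r| = 1$ away from the cut locus, which by the diffeomorphism hypothesis on $\exp_W$ is all of $E$), and consider the auxiliary function $\phi := u\, r^{-1/2}$ or, more to the point, introduce the transported quantity and test against $u^2$. The cleanest route is to compute $\int_E \d_g(u^2\, \nabla r)\,dv_g$ by the divergence theorem, which produces $\int_E(2u\,\langle \nabla u,\nabla r\rangle + u^2\,\Delta_g r)\,dv_g = -\int_W u^2\,\langle \nabla r, \nu\rangle\,d\sigma_g$ where $\nu$ is the inward unit normal; since $\nabla r = \nu_{\text{out}} = -\nu$ on $W$, this contributes the boundary term. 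I would then feed in the pointwise inequality coming from Cauchy--Schwarz, $|\nabla u|^2 \geq \langle \nabla u, \nabla r\rangle^2$, together with a weighted rearrangement of the divergence identity, to extract the coefficient $\tfrac14(\Delta_g r)^2$ on $u^2$. The terms $-\tfrac12|\nabla dr|^2 - \tfrac12\mathrm{Ric}_g(\nabla r,\nabla r)$ are exactly what appears when one differentiates $\Delta_g r$ along $\nabla r$: by the Bochner formula applied to $r$ (with $|\nabla r|\equiv 1$), one has $\nabla r(\Delta_g r) = -|\nabla dr|^2 - \mathrm{Ric}_g(\nabla r, \nabla r)$, so those two terms are $\tfrac12\,\nabla r(\Delta_g r)$, i.e.\ a derivative of the multiplier. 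This is the mechanism that converts a $\tfrac14(\Delta_g r)^2$ from squaring into the stated combination.

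So the key steps, in order, are: (1) verify $r$ is smooth with $|\nabla r|\equiv 1$ on $E$ using the $\exp_W$-diffeomorphism hypothesis, so that $\nabla dr$ is the second fundamental form of the level sets and all manipulations are legitimate; (2) complete the square, writing $\int_E|\nabla u|^2\,dv_g \geq \int_E\langle \nabla u, \nabla r\rangle^2\,dv_g$ and then $\int_E\bigl|\langle \nabla u,\nabla r\rangle + \tfrac12 u\,\Delta_g r\bigr|^2\,dv_g \geq 0$ to generate the cross term $\int_E u\,\langle\nabla u,\nabla r\rangle\,\Delta_g r\,dv_g$; (3) integrate that cross term by parts, invoking the Bochner identity $\nabla r(\Delta_g r) = -|\nabla dr|^2 - \mathrm{Ric}_g(\nabla r,\nabla r)$ to replace the derivative of $\Delta_g r$, which produces the curvature terms and leaves a boundary integral on $W$; (4) handle the extra $\tfrac{1}{4r^2}$ term, which must come from the $1$-dimensional Hardy-type estimate in the radial variable $\rho$, exploiting that $r = \rho + R$ and the boundary weight; (5) identify the boundary contribution as $\tfrac12\int_W(\Delta_g r - 1/R)u^2\,d\sigma_g$ and discard it using $\Delta_g r|_W = H_W \geq 1/R$ to pass to the second inequality. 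For the pole case I would simply replace $W$ by a small geodesic sphere around $p_0$ and let its radius tend to zero; the boundary term vanishes in the limit because $u$ is compactly supported and the sphere's measure shrinks, so only the bulk inequality survives.

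\textbf{The main obstacle} I anticipate is step (4): producing the genuinely \emph{extra} $\tfrac{1}{4r^2}$ term beyond what the $(\Delta_g r)^2$-completion already yields. On $\R^n$ the two coincide because $\Delta_g r = (n-1)/r$ makes $\tfrac14(\Delta_g r)^2 = \tfrac{(n-1)^2}{4r^2}$ absorb the Hardy constant, but the theorem asserts $\tfrac{1}{4r^2}$ \emph{in addition} to $\tfrac14(\Delta_g r)^2$. This forces a more delicate two-parameter completion of the square: rather than a single multiplier $\tfrac12\Delta_g r$, I expect one must use the combination $\tfrac12\Delta_g r + \tfrac{1}{2r}$ (so that on $\R^n$ the radial weight $\tfrac{n-1}{2r}+\tfrac{1}{2r} = \tfrac{n}{2r}$ is \emph{not} what appears — hence the bookkeeping is subtle), or equivalently to run a substitution $u = r^{-1/2} w$ first, turning the problem into a shifted one where the $\tfrac{1}{4r^2}$ emerges from the $r^{-1/2}$ conjugation exactly as in the classical one-dimensional Hardy inequality. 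Getting the cross-terms from this conjugation to assemble cleanly with the Bochner curvature terms, without leaving an uncontrolled remainder, is where the real care is needed, and where I would spend most of the effort checking signs.
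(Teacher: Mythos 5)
Your proposal follows essentially the same route as the paper's proof: working in Fermi coordinates, the paper conjugates by the square root of the volume density, setting $f = \sqrt{g^W}^{\frac{1}{2}}u$, which (since $\Delta_g r = \partial_r \sqrt{g^W}/\sqrt{g^W}$) is precisely your completion of the square with the single multiplier $\tfrac{1}{2}\Delta_g r$; the Riccati identity $-\partial_r(\Delta_g r) = |\nabla dr|^2 + {\rm Ric}_g(\nabla r, \nabla r)$ supplies the curvature terms exactly as you predict, and the pole case is handled by shrinking geodesic spheres as you describe (where one should note the boundary integrand $\Delta_g r - \tfrac{1}{\varepsilon}$ blows up like $\tfrac{n-2}{\varepsilon}$, so it is the cancellation against the measure, giving $\tfrac{n-2}{2}\sigma_{n-1}u(p_0)^2\varepsilon^{n-2} + O(\varepsilon^{n-1})$, rather than the shrinking measure alone, that kills the term). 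The obstacle you flag in step (4) is resolved by your second guess, not by a two-parameter multiplier: one \emph{retains} the completed square rather than discarding it, recognizes it fiberwise as $\int_R^\infty |f'|^2\,dr$, and applies the boundary-corrected one-dimensional Hardy inequality $\int_R^\infty |f'|^2\,dt \geq \int_R^\infty \tfrac{f^2}{4t^2}\,dt - \tfrac{f(R)^2}{2R}$ (the paper's inequality (7)) along each normal ray, whose boundary term $-f(R)^2/(2R)$ combines with the cross-term boundary contribution $\tfrac{1}{2}\Delta_g r\, u^2$ to yield exactly $\tfrac{1}{2}\int_W \bigl(\Delta_g r - \tfrac{1}{R}\bigr)u^2\,d\sigma_g$.
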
 
\begin{center}
\input{figends.tex}
Figure~1
\end{center}

\begin{rmk}\  
(i)\quad For $x_0 \in E$, 
$(\nabla dr)(x_0)$ and $(\Delta_g r)(x_0)$ are respectively 
the {\it second fundamental form} and the {\it mean curvature} 
of the level hypersurface $r^{-1}(x_0) = \{ x \in E\ |\ r(x) = r(x_0) \}$ at $x_0$ 
(with respect to the inward unit normal vector). \\ 
(ii)\quad Let $(M, g)$ be the Euclidean $n$-space $\mathbb{R}^n = (\mathbb{R}^n, g_0)$\ 
(resp.~the hyperbolic $n$-space $\mathbb{H}^n(-\kappa) = (\mathbb{H}^n(-\kappa), g_{\kappa})$ 
of constant negative curvature $- \kappa$). 
For each $R > 0$, we denote $B_R({\bf 0})$ the geodesic open ball of radius $R$ centered at the origin ${\bf 0}$ 
of $\mathbb{R}^n$\ (resp.~$\mathbb{H}^n(-\kappa)$ )
and 
$$
r(x) := \rho(x) + R = {\rm dist}_g(x, \partial B_R({\bf 0}) ) + R\ \ \big{(}\ = {\rm dist}_g(x, {\bf 0})\ \big{)} 
$$ 
for $x \in E := \mathbb{R}^n - B_R({\bf 0})$\ (resp.~$E := \mathbb{H}^n(-\kappa) - B_R({\bf 0})$\ ). 
Then, the term appearing in the boundary integral of (2) can be described as 
\begin{equation} 
\Big{(} \Delta_g r - \frac{1}{R} \Big{)}\Big{|}_{\partial B_R({\bf 0})} = 
\begin{cases} 
\ \ \frac{n-2}{R} & {\rm if} \quad (M, g) = \mathbb{R}^n, \\ 
\ \ (n-1)~\sqrt{\kappa}~{\rm coth}(\sqrt{\kappa} R) - \frac{1}{R} & {\rm if}\quad (M, g) = \mathbb{H}^n(-\kappa), \\  
\end{cases} 
\end{equation} 
and hence, when $n \geq 2$, this term is non-negative for all $R > 0$ in the both cases. 
More generally, let $(M, g)$ be a complete $n$-manifold with the following type end: 
$$ 
([0, \infty) \times N, dr^2 + f(r)^2\cdot g_N)\quad {\rm with}\quad 
f'(r) > 0 \quad {\rm for}\quad r \geq R_0 > 0,  
$$ 
where $(N, g_N)$ is a closed Riemannian $(n-1)$-manifold and $R_0 > 0$ is some positive constant. 
The condition $f'(r) > 0$ ($r \geq R_0$) implies that 
the boundary $\partial E_L$ of each $E_L := [L, \infty) \times N$ has the positive mean curvature 
$H_{\partial E_L} = \frac{(n-1) f'(L)}{f(L)} > 0$ provided that $L \geq R_0$. 
Then, if one choose $R > 0$ appropriately corresponding to $E_L$, 
the term $\Big{(} \Delta_g r - \frac{1}{R} \Big{)}\Big{|}_{\partial E_L}$  is non-negative. 
However, for a general end $E$ and a sequence $\{L_i\}$ with $0 < L_1 < L_2 < \cdots < L_i < \cdots \nearrow \infty$, 
the non-negativity of 
$\Big{(} \Delta_g r - \frac{1}{R} \Big{)}\Big{|}_{\partial E_{L_i}}$  
does not necessarily require an expansion of $g$ on $E$.  
For instance, suppose that $(M, g)$ has the following periodic end: 
$$ 
\big{(} [0, \infty) \times N, dr^2 + (2 + \sin r)^2 g_N \big{)}.  
$$ 
Then, for $E_{2m\pi} = [2m\pi, \infty) \times~N$ 
$(m \geq \frac{1}{(n-1)\pi}, m \in \mathbb{Z} )$ and $R := 2m\pi$, 
we obtain 
$$ 
\Big{(} \Delta_g r - \frac{1}{R} \Big{)}\Big{|}_{\partial E_{2m\pi}} = \frac{n-1}{2} -\frac{1}{2m\pi} \geq 0.  
$$ 
(iii)\quad When $(M, g) = (\mathbb{R}^n, g_0)$, 
we have 
$$ 
\Delta r = \frac{n-1}{r},\qquad |\nabla dr|^2 = \frac{n-1}{r^2},\qquad {\rm Ric}_{g_0}(\nabla r, \nabla r) = 0.  
$$ 
Hence, the inequality~(3) includes the inequality~(1) 
in the original uncertainty principle lemma on $\mathbb{R}^n$.  
\end{rmk} 

Applying Theorem~\ref{Thm-UPLG} to the hyperbolic $n$-space $\mathbb{H}^n(-\kappa)$, 
we obtain the explicit result below. 
\begin{thm}[{\bf Uncertainty Principle Lemma on $\mathbb{H}^n(-\kappa)$}]\label{Thm-UPLH}\ \ 
When $n \geq 2$, for $u \in C_c^{\infty}(\mathbb{H}^n(-\kappa))$ and $R > 0$, the following holds  
\begin{align} 
& \int_{\mathbb{H}^n(-\kappa) - B_R({\bf 0})} |\nabla u|^2 dv_{g_{\kappa}} \\ 
\geq & \int_{\mathbb{H}^n(-\kappa) - B_R({\bf 0})} 
\Big{\{} \frac{(n-1)^2 \kappa}{4} + \frac{1}{4 r^2} + \frac{(n-1)(n-3) \kappa}{4~{\rm sinh}^2(\sqrt{\kappa} r)} \Big{\}} 
u^2 dv_{g_{\kappa}}. \notag  
\end{align} 
In particular, 
\begin{align} 
\int_{\mathbb{H}^n(-\kappa)} |\nabla u|^2 dv_{g_{\kappa}} 
\geq \int_{\mathbb{H}^n(-\kappa)} 
\Big{\{} \frac{(n-1)^2 \kappa}{4} + \frac{1}{4 r^2} + \frac{(n-1)(n-3) \kappa}{4~{\rm sinh}^2(\sqrt{\kappa} r)} \Big{\}} 
u^2 dv_{g_{\kappa}}.
\end{align}  
Here, $r(x) := {\rm dist}_{g_{\kappa}}(x, {\bf 0})$. 
\end{thm}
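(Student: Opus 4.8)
The plan is to obtain this theorem as a direct specialization of Theorem~\ref{Thm-UPLG} to the constant-curvature model, so that the whole task reduces to computing the three geometric quantities $\Delta_g r$, $|\nabla dr|^2$ and $\mathrm{Ric}_{g_{\kappa}}(\nabla r, \nabla r)$ explicitly on $\mathbb{H}^n(-\kappa)$, simplifying the resulting bracket, and checking the hypotheses of Theorem~\ref{Thm-UPLG}.

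First I would recall that in geodesic polar coordinates centred at ${\bf 0}$ the metric reads $g_{\kappa} = dr^2 + \kappa^{-1}\sinh^2(\sqrt{\kappa}\,r)\, g_{S^{n-1}}$, so that $r(x) = \mathrm{dist}_{g_{\kappa}}(x, {\bf 0})$ is smooth away from ${\bf 0}$ with $|\nabla r| \equiv 1$. The Hessian $\nabla dr$ vanishes in the radial direction and equals $\sqrt{\kappa}\,\coth(\sqrt{\kappa}\,r)$ times the induced metric in each tangential direction, while a space form of curvature $-\kappa$ satisfies $\mathrm{Ric}_{g_{\kappa}} = -(n-1)\kappa\, g_{\kappa}$. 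This gives
\begin{align}
\Delta_g r &= (n-1)\sqrt{\kappa}\,\coth(\sqrt{\kappa}\,r), \qquad |\nabla dr|^2 = (n-1)\kappa\,\coth^2(\sqrt{\kappa}\,r), \notag \\
\mathrm{Ric}_{g_{\kappa}}(\nabla r, \nabla r) &= -(n-1)\kappa. \notag
\end{align}
Substituting these into the bracket $\tfrac14(\Delta_g r)^2 - \tfrac12|\nabla dr|^2 - \tfrac12\mathrm{Ric}_{g_{\kappa}}(\nabla r, \nabla r)$ of Theorem~\ref{Thm-UPLG}, the $\coth^2$-terms collect into $\tfrac{(n-1)(n-3)\kappa}{4}\coth^2(\sqrt{\kappa}\,r)$ together with the constant $\tfrac{(n-1)\kappa}{2}$. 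Using $\coth^2 = 1 + \sinh^{-2}$ and combining the two constants via $(n-3)+2 = n-1$ produces exactly $\tfrac{(n-1)^2\kappa}{4} + \tfrac{(n-1)(n-3)\kappa}{4\sinh^2(\sqrt{\kappa}\,r)}$; adding the surviving $\tfrac{1}{4r^2}$ term recovers the claimed integrand. Thus the algebraic core is a routine trigonometric rearrangement.

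It then remains to verify that the hypotheses of Theorem~\ref{Thm-UPLG} hold for the end $E = \mathbb{H}^n(-\kappa) - B_R({\bf 0})$ with $W = \partial B_R({\bf 0})$. The outward normal exponential map is a diffeomorphism because $\mathbb{H}^n(-\kappa)$ is a Cartan--Hadamard manifold with no conjugate points; the mean curvature is $H_W = (n-1)\sqrt{\kappa}\,\coth(\sqrt{\kappa}\,R) > 0$, and since $\coth(x) > 1/x$ for $x > 0$ one has $H_W > (n-1)/R \geq 1/R$ for $n \geq 2$, so the radius condition $H_W \geq 1/R$ is met for every $R > 0$; finally, by Remark~(ii) the boundary term $\tfrac12\int_W (\Delta_g r - 1/R)\,u^2\, d\sigma_g$ is non-negative. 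Discarding this non-negative boundary term in inequality~(2) yields the first inequality of the theorem. For the second inequality I would apply the pole form~(3) of Theorem~\ref{Thm-UPLG} directly, since every point of the Cartan--Hadamard manifold $\mathbb{H}^n(-\kappa)$ is a pole and $r(x) = \mathrm{dist}_{g_{\kappa}}(x, {\bf 0})$ is the associated distance; the identical bracket computation then gives the full-space estimate. The only point requiring a little care is the integrability of the singular weight $1/(4r^2)$ near the pole ${\bf 0}$, but this is already absorbed into the statement and proof of Theorem~\ref{Thm-UPLG}, so no genuinely new obstacle arises here.
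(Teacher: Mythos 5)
Your proposal is correct and follows essentially the same route as the paper: specialize Theorem~1.1 to $\mathbb{H}^n(-\kappa)$ using the identities $\Delta_{g_\kappa} r = (n-1)\sqrt{\kappa}\,\coth(\sqrt{\kappa}\,r)$, $|\nabla dr|^2 = (n-1)\kappa\,\coth^2(\sqrt{\kappa}\,r)$, $\mathrm{Ric}_{g_\kappa}(\nabla r,\nabla r) = -(n-1)\kappa$, simplify via $\coth^2 = 1+\sinh^{-2}$, and use the non-negativity of the boundary term (the paper's formula~(4)) together with the pole form~(3). Your additional explicit verification of the hypotheses of Theorem~1.1 (diffeomorphism of the normal exponential map on a Cartan--Hadamard manifold, $H_W \geq 1/R$ via $\coth x > 1/x$) is a harmless elaboration of what the paper leaves implicit.
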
 
\begin{rmk}\ \ 
Since 
$$ 
\lim_{\kappa \searrow 0} \frac{{\rm sinh}(\sqrt{\kappa} r)}{\sqrt{\kappa} r} = 1, 
$$  
by letting $\kappa \searrow 0$ in $(6)$, 
we recover the inequality~(1) in the original uncertainty principle lemma on $\mathbb{R}^n$. 
\end{rmk} 

Using this lemma, we also obtain the following explicit criterion. 
\begin{thm}\label{Thm-H}\ \ 
Let $- \Delta_{g_{\kappa}} + V$ be the Schr\"odinger operator with a potential $V$ 
on $L^2(\mathbb{H}^n(-\kappa))$, where $n \geq 2$.  
Assume that $\sigma_{{\rm ess}}(- \Delta_{g_{\kappa}} + V) = [\frac{(n-1)^2 \kappa}{4}, \infty)$. \\  
$({\rm i})$\quad Assume that, there exist $R_0 > 0$ such that $V$ satisfies 
$$ 
V(x) \geq - \Big{(} \frac{1}{4 r^2} + \frac{(n-1)(n-3) \kappa}{4~{\rm sinh}^2(\sqrt{\kappa} r)} \Big{)}\qquad 
{\rm for}\quad r \geq R_0. 
$$ 
Then, $\sigma_{{\rm disc}}(- \Delta_{g_{\kappa}} + V)$ is finite. \\ 
$({\rm ii})$\quad Assume that, there exist $\delta > 0$ and $R_1 > 0$ such that $V$ satisfies 
$$ 
V(x) \leq - (1+\delta)\frac{1}{4r^2}\qquad {\rm for}\quad r \geq R_1. 
$$ 
Then, $\sigma_{{\rm disc}}(- \Delta_{g_{\kappa}} + V)$ is infinite.  
\end{thm}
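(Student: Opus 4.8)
The plan is to reduce both assertions to a study of the spectrum of $H := -\Delta_{g_\kappa} + V$ below the bottom of its essential spectrum $\Lambda := \frac{(n-1)^2\kappa}{4}$, and to count the eigenvalues there by the min-max principle. Since $\sigma_{\rm ess}(H) = [\Lambda,\infty)$ by hypothesis, the discrete spectrum consists precisely of the eigenvalues lying in $(-\infty,\Lambda)$; thus $\sigma_{\rm disc}(H)$ is finite (resp.\ infinite) exactly when the number of such eigenvalues, counted with multiplicity, is finite (resp.\ infinite). Part (i) will follow from the uncertainty principle lemma on $\mathbb{H}^n(-\kappa)$ (Theorem~\ref{Thm-UPLH}) together with a standard decomposition argument, exactly as Theorem~RSK(i) follows from the Euclidean Uncertainty Principle Lemma; part (ii) will be obtained by constructing an infinite family of trial functions with disjoint supports marching to infinity, exploiting the supercritical coupling $(1+\delta)$ against the borderline Hardy weight $\frac{1}{4r^2}$.

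For (i), let $u \in C_c^\infty(\mathbb{H}^n(-\kappa) - B_{R_0}({\bf 0}))$. Adding $\int V u^2\, dv_{g_\kappa}$ to the exterior inequality (5) of Theorem~\ref{Thm-UPLH} and invoking the pointwise lower bound $V \geq -\big(\frac{1}{4r^2} + \frac{(n-1)(n-3)\kappa}{4\sinh^2(\sqrt\kappa r)}\big)$ on $\{r \geq R_0\}$, the two variable-coefficient terms cancel and we obtain $\int |\nabla u|^2 + V u^2\, dv_{g_\kappa} \geq \Lambda \int u^2\, dv_{g_\kappa}$. Hence the quadratic form of $H$ is bounded below by $\Lambda\|u\|_{L^2}^2$ on all functions supported in the exterior region $\{r \geq R_0\}$, the complement of which has compact closure. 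The decomposition principle (Neumann bracketing between the compact core and the exterior, as in \cite{Re-Si-book-4, Ki-Si}) then forces $\dim\,{\rm Ran}\,E_{(-\infty,\Lambda)}(H) < \infty$, i.e.\ $\sigma_{\rm disc}(H)$ is finite.

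For (ii), I would work with radial trial functions $u(x) = \psi(r(x))$, for which $\int |\nabla u|^2\, dv_{g_\kappa} = \omega_{n-1}\int \psi'(r)^2 J(r)\, dr$ and $\int u^2\, dv_{g_\kappa} = \omega_{n-1}\int \psi(r)^2 J(r)\, dr$, where $J(r) := \big(\frac{\sinh(\sqrt\kappa r)}{\sqrt\kappa}\big)^{n-1}$. Performing the ground-state substitution $\psi = J^{-1/2}\phi$ and integrating by parts converts the numerator of the Rayleigh quotient into $\omega_{n-1}\int \big(\phi'^2 + W\phi^2\big)\, dr$ with effective potential $W(r) = \frac14(J'/J)^2 + \frac12(J'/J)' = \Lambda + \frac{(n-1)(n-3)\kappa}{4\sinh^2(\sqrt\kappa r)}$, recovering exactly the weight of Theorem~\ref{Thm-UPLH} apart from the Hardy term. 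Using the hypothesis $V \leq -(1+\delta)\frac{1}{4r^2}$ on $\{r \geq R_1\}$ (which needs only the pointwise bound, not radiality of $V$), the requirement $\langle u, Hu\rangle < \Lambda\|u\|^2$ reduces, for $\phi$ supported at large $r$, to making $\int \phi'^2\, dr + \int\big(\frac{(n-1)(n-3)\kappa}{4\sinh^2(\sqrt\kappa r)} - (1+\delta)\frac{1}{4r^2}\big)\phi^2\, dr$ negative. As the $\sinh^{-2}$ term decays exponentially while $\frac{1}{r^2}$ decays only polynomially, for supports pushed far enough out this is governed by the supercritical one-dimensional inequality $\int\phi'^2 < (1+\delta)\frac14\int\frac{\phi^2}{r^2}$, which the sharp but unattained Hardy constant $\frac14$ lets us violate. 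Concretely I would take $\phi_k(r) = r^{1/2}\sin(\beta\log r)$ cut off to an interval $[a_k, a_k e^{m\pi/\beta}]$ with $\beta \in (0,\sqrt{\delta}/2)$, whose limiting Rayleigh quotient is $\frac14+\beta^2 < (1+\delta)\frac14$, and translate these intervals to infinity so the supports are pairwise disjoint. Disjoint supports make the $u_k$ orthogonal and form-orthogonal, so any $N$ of them span an $N$-dimensional space on which the Rayleigh quotient stays below $\Lambda$; by min-max, $H$ has at least $N$ eigenvalues below $\Lambda$ for every $N$, whence $\sigma_{\rm disc}(H)$ is infinite (and a fortiori so should $H$ fail to be bounded below).

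The main obstacle I expect is the quantitative bookkeeping in (ii): because Theorem~\ref{Thm-UPLH} shows that $\frac{1}{4r^2}$ is precisely the borderline weight, one cannot merely insert a bound but must genuinely beat the non-attained Hardy constant, and must verify that the surplus $\frac{\delta}{4r^2}$ dominates both the exponentially small excess $W-\Lambda$ and the gap between $V$ and its upper bound, uniformly once the annuli sit at sufficiently large radius. Taking $\beta$ small keeps the limiting quotient just below the critical value, and pushing the supports to infinity supplies the needed uniform strict negativity together with the cutoff error control; making these estimates precise, while routine, is where the real work lies. Part (i), by contrast, is essentially immediate once Theorem~\ref{Thm-UPLH} and the decomposition principle are in hand.
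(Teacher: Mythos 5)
Your proposal is correct, and its skeleton coincides with the paper's: part (i) is exactly the paper's route (the paper simply says it is ``similar to Theorem~RSK-(i)'' and omits it), namely the exterior bound from inequality (5) plus Neumann bracketing against the compact core; and your ``ground-state substitution'' $\psi = J^{-1/2}\phi$ in part (ii) is precisely the paper's unitary conjugation $U:\psi\mapsto h^{\frac{n-1}{2}}\psi$ with $h(r)=\sinh r$ (the paper normalizes $\kappa=1$), producing the identical effective one-dimensional potential $\frac{(n-1)^2\kappa}{4}+\frac{(n-1)(n-3)\kappa}{4\sinh^2(\sqrt{\kappa}r)}+V(r)$, with the exponentially small $\sinh^{-2}$ excess absorbed into the surplus $\frac{\delta}{4r^2}$ by pushing supports to large $r$, exactly as the paper does via $\widetilde{\delta}=\frac{\delta}{2}$ and a large $\widetilde{R}$. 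The only genuine divergence is the choice of one-dimensional test functions: Lemma~\ref{Sub} in the paper takes $\varphi=\chi(t)t^{1/2}$ with a piecewise-linear cutoff over the logarithmically long interval $[R,2kR]$, getting the form value $\leq 3-\frac{\widetilde{\delta}}{4}\log(\frac{k}{2})$, whereas you take the oscillatory functions $r^{1/2}\sin(\beta\log r)$ on $[a,ae^{m\pi/\beta}]$ with $\beta^2<\frac{\delta}{4}$; in the variable $s=\log r$ your Rayleigh quotient against $\frac{1}{4r^2}$ is in fact \emph{exactly} $\frac{1}{4}+\beta^2$ (the cross term integrates to zero over full periods), so no limiting argument is needed. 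Both devices beat the unattained Hardy constant; yours yields a cleaner quotient, the paper's avoids trigonometry and quantifies the gain as $\log k$, which is what its Theorem~\ref{Thm-ALE} later reuses to absorb $O(R^{-\tau})$ errors. Two small corrections: your parenthetical claim that $H$ should ``fail to be bounded below'' is false --- the eigenvalues you produce accumulate only at $\Lambda=\frac{(n-1)^2\kappa}{4}$ from below, since the spectrum below $\Lambda$ is discrete, and $H$ remains bounded below; delete that aside. And in (i), phrase the exterior bound for restrictions of arbitrary $u\in C_c^{\infty}(\mathbb{H}^n(-\kappa))$ rather than for $u$ compactly supported in the open exterior: Neumann bracketing needs the form inequality for functions that do not vanish on the interface $\partial B_{R_0}({\bf 0})$, and inequality (5) supplies exactly this because the boundary term in (2) is nonnegative by (4).
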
 

We regards Theorem~RSK and Theorem~\ref{Thm-H} as the model criterions. 
Using also Theorem~\ref{Thm-UPLG} in a sense of approximation, 
we have the following criterions. 
\begin{thm}\label{Thm-ALE}\ \ 
Let $(M, g)$ be a complete noncompact $n$-manifold with $n \geq 3$ 
and $- \Delta_g + V$ the Schr\"odinger operator with a potential $V$. 
Take a point $p_0 \in M$, and set $r(p) := {\rm dist}_g(p, p_0)$ for $p \in M$. 
Assume that, 
there exist some positive constants $L, L', K > 0$ 
and a small positive constant $\tau~(0 < \tau < 1)$ such that 
\begin{align*} 
&(1.1)\qquad \qquad |\mathcal{R}_g| \leq L r^{-(2+\tau)},\quad 
|\nabla {\rm Ric}_g| \leq L' r^{-(3+\tau)}\quad {\rm for\ all\ large}\ r, 
\qquad \qquad \qquad \qquad \qquad \qquad \qquad \qquad \\ 
&(1.2)\qquad \qquad {\rm Vol}(B_t(p_0)) \geq K t^n\quad {\rm for\ all}\ t > 0, \\ 
&(1.3)\qquad \qquad \sigma_{\rm ess}(- \Delta_g + V) = [0, \infty), 
\end{align*}
where $\mathcal{R}_g$ denotes the Riemannian curvature tensor of $g$. \\ 
$({\rm i})$\quad Assume that, there exist $\delta_0 > 0$ and $R_0 > 0$ such that $V$ satisfies 
$$ 
V(x) \geq - (1 - \delta_0) \frac{(n-2)^2}{4r^2}\qquad {\rm for}\quad r \geq R_0. 
$$ 
Then, $\sigma_{{\rm disc}}(- \Delta_g + V)$ is finite. \\ 
$({\rm ii})$\quad Assume that, there exist $\delta_1 > 0$ and $R_1 > 0$ such that $V$ satisfies 
$$ 
V(x) \leq - (1 + \delta_1)\frac{(n-2)^2}{4r^2}\qquad {\rm for}\quad r \geq R_1. 
$$ 
Then, $\sigma_{{\rm disc}}(- \Delta_g + V)$ is infinite.   
\end{thm}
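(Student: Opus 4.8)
\ \
The plan is to transplant the proof of Theorem~RSK to $(M,g)$, using Theorem~\ref{Thm-UPLG} in place of the flat Uncertainty Principle Lemma. The point is that the geometric hypotheses $(1.1)$--$(1.2)$ force the \emph{effective potential}
$$
\Phi(r) := \frac{1}{4r^2} + \frac{1}{4}(\Delta_g r)^2 - \frac{1}{2}|\nabla dr|^2 - \frac{1}{2}{\rm Ric}_g(\nabla r, \nabla r)
$$
appearing in Theorem~\ref{Thm-UPLG} to be asymptotic to the classical borderline weight $\frac{(n-2)^2}{4r^2}$. Indeed, once one knows the Euclidean asymptotics $\Delta_g r = \frac{n-1}{r}(1+o(1))$, $|\nabla dr|^2 = \frac{n-1}{r^2}(1+o(1))$ and ${\rm Ric}_g(\nabla r,\nabla r) = O(r^{-(2+\tau)})$, the coefficient of $r^{-2}$ in $\Phi$ is $\frac14 + \frac{(n-1)^2}{4} - \frac{n-1}{2} = \frac{(n-2)^2}{4}$, so that $\Phi(r) = \frac{(n-2)^2}{4r^2}(1+o(1))$ as $r\to\infty$. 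This identity is the bridge between the curvature of $(M,g)$ and the Hardy constant of Theorem~RSK.

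\textbf{Step 1 (geometric asymptotics).}\ \ First I would show that for all sufficiently large $R$ the exterior $E := M\setminus B_R(p_0)$ meets the hypotheses of Theorem~\ref{Thm-UPLG} with $W := \partial B_R(p_0)$: the sphere $W$ is a connected embedded hypersurface, the outward normal exponential map is a diffeomorphism onto $E$ (so $r = {\rm dist}_g(\cdot,p_0) = \rho + R$ is smooth on $E$), and $H_W = \frac{n-1}{R}(1+o(1)) \geq \frac1R$. These follow from asymptotic flatness: the faster-than-quadratic decay in $(1.1)$ together with the maximal volume growth $(1.2)$ confines the cut locus of $p_0$ to a bounded set. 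The quantitative asymptotics of $\Delta_g r = {\rm tr}(\nabla dr)$ and $|\nabla dr|^2$ then come from the Riccati equation for the shape operator $S := \nabla dr$ of the geodesic spheres, $\partial_r S + S^2 + R_{\partial_r} = 0$ (with $R_{\partial_r} = \mathcal{R}_g(\cdot,\nabla r)\nabla r$): comparing its solution with the flat solution $\frac1r\,{\rm Id}$ and using $|R_{\partial_r}| \leq |\mathcal{R}_g| \leq Lr^{-(2+\tau)}$ gives $S = \frac1r\,{\rm Id} + O(r^{-1-\tau})$, whence the stated asymptotics and, via co-area, the sphere-area function $A(t) := \int_{\partial B_t(p_0)} d\sigma_g = \omega_\infty t^{n-1}(1+o(1))$ with $\omega_\infty > 0$. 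The derivative bound $|\nabla {\rm Ric}_g| \leq L'r^{-(3+\tau)}$ is what upgrades these pointwise estimates to the uniform control of $A'(t)$ (equivalently, the slow variation of $A(t)/t^{n-1}$) needed in Step~3.

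\textbf{Step 2 (part (i), finiteness).}\ \ Fix $R$ large and decouple by Dirichlet bracketing across $W$: since imposing the Dirichlet condition on $W$ raises the operator in the form sense, the number $N_0$ of negative eigenvalues of $-\Delta_g + V$ satisfies $N_0 \leq N_0(B_R) + N_0(E)$. On the relatively compact piece the Dirichlet operator has compact resolvent and $V$ is bounded, so $N_0(B_R) < \infty$. On $E$, for $u \in C_c^\infty$ supported in the interior of $E$ the boundary integral in $(2)$ vanishes, so Theorem~\ref{Thm-UPLG} gives $\int_E |\nabla u|^2\,dv_g \geq \int_E \Phi(r)\,u^2\,dv_g$. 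Choosing $R$ so large that $\Phi(r) \geq (1-\tfrac12\delta_0)\frac{(n-2)^2}{4r^2}$ on $E$ (possible by Step~1) and invoking $V \geq -(1-\delta_0)\frac{(n-2)^2}{4r^2}$, we obtain
$$
\int_E \big(|\nabla u|^2 + V u^2\big)\,dv_g \ \geq\ \tfrac12\delta_0\,\frac{(n-2)^2}{4}\int_E \frac{u^2}{r^2}\,dv_g \ \geq\ 0,
$$
so $N_0(E) = 0$ and hence $N_0 \leq N_0(B_R) < \infty$. As $\sigma_{\rm ess} = [0,\infty)$ forces $\sigma_{\rm disc} \subset (-\infty,0)$, the discrete spectrum is finite.

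\textbf{Step 3 (part (ii), infiniteness, and the main obstacle).}\ \ Here Theorem~\ref{Thm-UPLG} is of no direct use, since infiniteness requires an \emph{upper} bound for $\int|\nabla u|^2$; instead I would exhibit an infinite-dimensional trial space on which the form is negative. Using the borderline radial profile, set $u_k := \phi_k\circ r$ with $\phi_k(t) := t^{-(n-2)/2} g_k(\log t)$, where each $g_k \in C_c^\infty(\R)$ is a slowly varying bump supported on a long interval $[s_k,s_k+\ell_k]$, with $s_k \nearrow \infty$ and the intervals pairwise disjoint; thus the $u_k$ have disjoint supports lying far out in $E$. By the co-area formula and Step~1, the substitution $s=\log t$ yields, up to errors vanishing as $s_k\to\infty$,
$$
\int_M |\nabla u_k|^2\,dv_g \approx \omega_\infty\!\int\! (g_k')^2\,ds + \frac{(n-2)^2}{4}\,\omega_\infty\!\int\! g_k^2\,ds,\qquad \int_M \frac{u_k^2}{r^2}\,dv_g \approx \omega_\infty\!\int\! g_k^2\,ds,
$$
the cross term $\int g_k g_k'\,ds$ vanishing for compactly supported $g_k$. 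With $V \leq -(1+\delta_1)\frac{(n-2)^2}{4r^2}$ on the support, the Rayleigh numerator becomes $\approx \omega_\infty\int (g_k')^2\,ds - \delta_1\frac{(n-2)^2}{4}\omega_\infty\int g_k^2\,ds$; taking $g_k$ essentially constant on $[s_k,s_k+\ell_k]$ makes $\int (g_k')^2 = O(1)$ while $\int g_k^2 \asymp \ell_k$, so for $\ell_k$ large the numerator is negative. Since the $u_k$ are pairwise $L^2$-orthogonal with disjoint supports, $\{u_1,\dots,u_m\}$ spans an $m$-dimensional space on which the form is negative; by min-max and $\sigma_{\rm ess} = [0,\infty)$, $-\Delta_g + V$ has at least $m$ discrete eigenvalues below $0$ for every $m$, hence $\sigma_{\rm disc}$ is infinite. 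The main obstacle throughout is Step~1: proving that only pointwise decay of the curvature and its first derivative, together with the Euclidean volume bound, suffice both to eliminate the cut locus in the exterior (so that $r$ is smooth and Theorem~\ref{Thm-UPLG} applies) and to control the shape operator of the geodesic spheres to order $o(r^{-1})$; an error that were merely $O(r^{-2})$ without the gain of $r^{-\tau}$ would spoil the identity $\frac14 + \frac{(n-1)^2}{4} - \frac{n-1}{2} = \frac{(n-2)^2}{4}$, so the decay exponents in $(1.1)$ must be used sharply.
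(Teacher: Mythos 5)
There is a genuine gap, and it sits exactly at the point you yourself flagged as ``the main obstacle.'' Your Step~1 claims that $(1.1)$--$(1.2)$ confine the cut locus of $p_0$ to a bounded set, so that for large $R$ the exterior $E = M - B_R(p_0)$ satisfies the diffeomorphism hypothesis of Theorem~\ref{Thm-UPLG} with $W = \partial B_R(p_0)$. This is false in general, and the paper explicitly warns against it: under $(1.1)$--$(1.2)$ the manifold is (by Bando--Kasue--Nakajima) ALE with ends of the form $\big(\mathbb{R}^n - B_R({\bf 0})\big)/\Gamma$, and when $\Gamma \neq \{1\}$ pairs of minimizing geodesics from $p_0$ that wrap around the quotient meet arbitrarily far out, so the cut locus is unbounded and $r$ fails to be smooth on any exterior region (a flat cone already shows the phenomenon; curvature decay does not kill it, since the obstruction is global, not local). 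Consequently neither the smoothness of $r$ on $E$ nor your Riccati-comparison asymptotics for the shape operator of geodesic spheres can be justified as stated, and Theorem~\ref{Thm-UPLG} cannot be applied directly. The paper's actual route is to invoke Bando--Kasue--Nakajima to obtain coordinates at infinity, replace the geodesic distance $r$ by the coordinate radius $\rho(x) = |x|$ (whose level sets do foliate the end), and then \emph{redo} the Fermi-coordinate/Hardy computation from the proof of Theorem~\ref{Thm-UPLG} with error terms $O(\rho^{-\tau})$, using $(16)$--$(17)$ to translate between $\rho$ and $r$ in the hypotheses on $V$. Your observation that $\tfrac14 + \tfrac{(n-1)^2}{4} - \tfrac{n-1}{2} = \tfrac{(n-2)^2}{4}$ is indeed the arithmetic heart of the matter, but it must be run for $\rho$, not $r$.

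There is also a second, independent error in Step~2: your bracketing goes the wrong way. Imposing Dirichlet conditions on $W$ \emph{restricts} the form domain and hence \emph{raises} eigenvalues, which yields $N_0(M) \geq N_{0,D}(B_R) + N_{0,D}(E)$ --- useless for finiteness. The correct decoupling is Neumann (the paper's ``domain monotonicity with vanishing Neumann data''), which enlarges the form domain and gives $N_0(M) \leq N_{0,N}(B_R) + N_{0,N}(E)$. But then the exterior estimate must hold for test functions \emph{not} vanishing on $W$, which is precisely where the boundary term $\tfrac12\int_W \big(\Delta_g \rho - \tfrac1R\big)u^2\,d\sigma_g$ in inequality $(2)$ is needed; it is nonnegative for large $R$ since the mean curvature of the coordinate spheres is $\tfrac{n-1}{R} + O(R^{-1-\tau})$ and $n \geq 3$. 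By restricting to $u$ compactly supported in the interior of $E$ you discarded exactly the term that makes the Neumann argument work. Your Step~3 test functions are essentially the paper's ($t^{-(n-2)/2}$ times a logarithmic bump is the unitary transplant of the Hardy-borderline profile $\chi(t)t^{1/2}$ of Lemma~\ref{Sub}), so part (ii) is morally right, but as written it again leans on the smoothness of $r$ and the convergence of $A(t)/t^{n-1}$ from the invalid Step~1; the paper instead builds the test functions radially in $\rho$ on the ALE end, picking up only a factor $1/|\Gamma|$ and $O(R^{-\tau})$ corrections.
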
 
\begin{rmk}\ \ 
By Theorem~(1.1) and Remark~(1.8) in \cite{Ba-Ka-Na}, 
$(M, g)$ is {\it asymptotically locally Euclidean} (abbreviated to {\it ALE}~) 
of order $\tau$ with finitely many ends. 
Namely, there exists a relatively compact open set $\mathcal{O}$ such that 
each component (i.e., each end) of $M - \mathcal{O}$ has {\it coordinates at infinity} 
$x = (x^1, \cdots, x^n)$; that is, 
there exist $R > 0$ and a finite subgroup $\Gamma \subset O(n)$ 
acting freely on $\mathbb{R}^n - B_R({\bf 0})$ 
such that the end is diffeomorphic to $\big{(} \mathbb{R}^n - B_R({\bf 0}) \big{)} \big{/} \Gamma$ 
and, with respect to $x = (x^1, \cdots, x^n)$, the metric $g$ satisfies the following on the end:  
$$ 
g_{ij} = \delta_{ij} + O(|x|^{-\tau}),\qquad 
\partial_k g_{ij} = O(|x|^{-1-\tau}),\qquad 
\partial_k \partial_{\ell}g_{ij} = O(|x|^{-2-\tau}).  
$$ 
In this case, one can easily check that $\sigma_{\rm ess}(- \Delta_g) = [0, \infty)$ for the ALE manifold $(M, g)$. 
From the argument in Proof of Theorem~\ref{Thm-ALE}-(ii) later, 
one can also easily see that, 
if the potential $V$ satisfies the decay condition in (ii) only on one end, 
then $\sigma_{\rm disc}(- \Delta_g + V)$ is infinite. 
\end{rmk} 
\begin{thm}\label{Thm-AH}\ \ 
Let $(M, g)$ be an asymptotically hyperbolic $n$-manifold of class $C^2$ 
with $n \geq 2$, 
and $- \Delta_g + V$ the Schr\"odinger operator with a potential $V$. 
Assume that 
$$ 
\sigma_{\rm ess}(- \Delta_g + V) = [\frac{(n-1)^2}{4}, \infty). 
$$ 
Take a point $p_0 \in M$, and set $r(x) := {\rm dist}_g(x, p_0)$ for $x \in M$. \\ 
$({\rm i})$\quad Assume that, there exist $\delta_0 > 0$ and $R_0 > 0$ such that $V$ satisfies 
$$ 
V(x) \geq - (1 - \delta_0) \frac{1}{4r^2}\qquad {\rm for}\quad r \geq R_0. 
$$ 
Then, $\sigma_{{\rm disc}}(- \Delta_g + V)$ is finite. \\ 
$({\rm ii})$\quad Assume that, there exist $\delta_1 > 0$ and $R_1 > 0$ such that $V$ satisfies 
$$ 
V(x) \leq - (1 + \delta_1)\frac{1}{4r^2}\qquad {\rm for}\quad r \geq R_1. 
$$ 
Then, $\sigma_{{\rm disc}}(- \Delta_g + V)$ is infinite.   
\end{thm}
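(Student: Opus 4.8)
The plan is to follow the variational strategy already used for Theorem~\ref{Thm-H} and Theorem~\ref{Thm-ALE}, now regarding the asymptotically hyperbolic $(M,g)$ as a perturbation of $\mathbb{H}^n(-1)$ near infinity. Set $\Lambda := \frac{(n-1)^2}{4} = \inf\sigma_{\rm ess}(-\Delta_g + V)$ and consider the quadratic form
$$
Q_\Lambda(u) := \int_M |\nabla u|^2 dv_g + \int_M (V - \Lambda)\, u^2 dv_g, \qquad u \in C_c^\infty(M).
$$
By the min-max principle the number of eigenvalues of $-\Delta_g + V$ below $\Lambda$ equals the largest dimension of a subspace of $C_c^\infty(M)$ on which $Q_\Lambda < 0$; since $\sigma_{\rm ess}$ starts at $\Lambda$, this number is finite (resp.\ infinite) exactly when $\sigma_{\rm disc}(-\Delta_g+V)$ is. Everything therefore reduces to the sign of $Q_\Lambda$ on functions supported near infinity, where Theorem~\ref{Thm-UPLH} with $\kappa = 1$ is the model inequality.

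For (i), I would fix $R$ so large that on each end the geodesic sphere $W := \{r = R\}$ is smooth, has mean curvature $H_W \geq \frac1R$, and the outward normal exponential map is a diffeomorphism onto $E := \{r \geq R\}$; all three hold for large $R$ because the sectional curvature is close to $-1$ there. Applying Theorem~\ref{Thm-UPLG} to $E$ and discarding the non-negative boundary term gives, for $u \in C_c^\infty(M)$ supported in $E$,
$$
\int_E |\nabla u|^2 dv_g \geq \int_E \Big\{ \tfrac14(\Delta_g r)^2 - \tfrac12|\nabla dr|^2 - \tfrac12 \mathrm{Ric}_g(\nabla r,\nabla r) + \tfrac{1}{4r^2}\Big\}\, u^2 dv_g.
$$
Asymptotic hyperbolicity forces $\Delta_g r \to (n-1)$, $|\nabla dr|^2 \to (n-1)$ and $\mathrm{Ric}_g(\nabla r,\nabla r) \to -(n-1)$, so the braced integrand tends to $\Lambda + \frac{1}{4r^2}$; the decisive point is that the discrepancy decays faster than $r^{-2}$. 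Together with $V \geq -(1-\delta_0)\frac{1}{4r^2}$ this yields $Q_\Lambda(u) \geq \int_E \{\frac{\delta_0}{4r^2} - o(r^{-2})\}u^2 dv_g \geq 0$ for $u$ supported in $E$, after enlarging $R$. A standard IMS/decomposition argument (as in the proof of Theorem~\ref{Thm-ALE}-(i)) then confines the negative part of $Q_\Lambda$ to a compact region modulo a bounded, compactly supported error, so $\sigma_{\rm disc}$ is finite.

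For (ii), I would exhibit infinitely many trial functions with disjoint supports near infinity. Take radial $u_k(x) = w_k(r(x))$ with $\mathrm{supp}\,w_k \subset [a_k, b_k] \subset [R_1,\infty)$, the intervals pairwise disjoint and marching to infinity. Writing $A(t)$ for the area of $\{r = t\}$, the coarea formula and an integration by parts that removes the volume factor reduce the form to a one-dimensional one; inserting $V \leq -(1+\delta_1)\frac{1}{4r^2}$ pointwise gives, for a suitable reduced profile still denoted $w_k$,
$$
Q_\Lambda(u_k) \leq \int_{a_k}^{b_k}\Big\{ (w_k')^2 + \tfrac{(n-1)(n-3)}{4\sinh^2 t}\, w_k^2 - (1+\delta_1)\tfrac{1}{4t^2}\, w_k^2 \Big\}\, dt + \text{(errors)}.
$$
Since the $\sinh^{-2}$ term and the geometric errors are exponentially small in $t$, they are dominated by the slack $\frac{\delta_1}{4t^2}$ once $a_k$ is large; the remaining strictly supercritical one-dimensional Hardy form has oscillatory solutions $w \sim \sqrt{t}\,\sin(c\log t)$, so cutting off between successive zeros produces $w_k$ on disjoint intervals with $Q_\Lambda(u_k) < 0$. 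As the $u_k$ have disjoint supports $Q_\Lambda$ is negative on their infinite-dimensional span, whence $\sigma_{\rm disc}$ is infinite.

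The main obstacle in both parts is quantitative: one must show that $\Delta_g r$, $|\nabla dr|^2$ and $\mathrm{Ric}_g(\nabla r,\nabla r)$ approach their hyperbolic values fast enough that the resulting errors decay faster than $r^{-2}$. This is exactly where the $C^2$ asymptotic hyperbolicity is used: the exponential smallness of the curvature perturbation is fed, through the Riccati equation $S' + S^2 + \mathcal{R}_g(\,\cdot\,,\nabla r)\nabla r = 0$ for the shape operator $S = \nabla dr$ of the geodesic spheres, into exponential control of $S$ and hence of $\Delta_g r = \mathrm{tr}\,S$ and $|\nabla dr|^2 = |S|^2$; this is what lets the $\delta_0$- and $\delta_1$-slacks absorb all error terms.
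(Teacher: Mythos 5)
Your overall variational architecture (reduce to the sign of $Q_\Lambda$ near infinity, bracket the compact part, disjointly supported negative-energy trial functions for (ii)) matches the paper's, but there is a genuine gap at the foundation of part (i): you apply Theorem~\ref{Thm-UPLG} with $W=\{r=R\}$ and justify its hypotheses by asserting that the geodesic sphere is smooth and the outward normal exponential map is a diffeomorphism ``because the sectional curvature is close to $-1$ there.'' That inference is false: curvature pinching near $-1$ excludes conjugate points but not cut points, and for asymptotically hyperbolic manifolds the cut locus of $p_0$ can run out to infinity --- e.g.\ a funnel end of a convex cocompact hyperbolic surface, $dt^2+\ell^2\cosh^2 t\,d\theta^2$, where geodesics wrapping around the core in the two directions produce cut points along a ray to infinity, so $\{r=R\}$ fails to be smooth for \emph{every} large $R$, and $r$ is not $C^2$ where your Riccati argument for $S=\nabla dr$ is supposed to run. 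The paper is explicitly alert to this trap (see the remark in the proof of Theorem~\ref{Thm-ALE} that Theorem~\ref{Thm-UPLG} ``can not be applied directly'') and circumvents it by never using the distance function near infinity: it takes the $C^3$ collar coming from the inward normal exponential map of the \emph{compactified} metric $\overline g=\lambda^2 g$, sets $\rho=-\log(t/t_0)$, shows $g=(1+O''(e^{-\rho}))d\rho^2+e^{2\rho}\widetilde g$, derives the estimates (31)--(35) ($\Delta_g\rho=(n-1)+O'(e^{-\rho})$, $|\nabla d\rho|^2=(n-1)+O'(e^{-\rho})$, ${\rm Ric}_g(\nabla\rho,\nabla\rho)=-(n-1)+O(e^{-\rho})$, via Mazzeo's curvature estimate rather than a Riccati ODE along radial geodesics), reruns the one-dimensional integration-by-parts proof of Theorem~\ref{Thm-UPLG} with these error terms to obtain the approximate uncertainty inequality (36) on $\{\rho\geq R\}$, and only at the end converts between $r$ and $\rho$ via $|r-\rho|\leq C$ (estimate (29)); the $\delta_0$-slack absorbs both the $O(e^{-\rho})$ errors and the $O(\rho^{-3})$ discrepancy between $\frac{1}{4r^2}$ and $\frac{1}{4\rho^2}$.

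The same substitution is needed in your part (ii): your one-dimensional reduction (``an integration by parts that removes the volume factor'') differentiates the area function $A(t)$ of the $r$-spheres twice, but with a nontrivial cut locus $A$ is in general only of bounded variation and $\Delta_g r$ has a singular distributional part, so the reduced potential $\frac{(n-1)(n-3)}{4\sinh^2 t}+\Lambda$ is not legitimately obtained from $r$. Working in the collar variable $\rho$ with the smooth density $\sqrt g=e^{(n-1)\rho}\sqrt{\det\widetilde g_{\alpha\beta}}$ repairs this, and is exactly what the paper does fiberwise, following the proof of Theorem~\ref{Thm-ALE}-(ii) with the transplanted profile of Lemma~\ref{Sub}. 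Your oscillatory profiles $\sqrt t\,\sin(c\log t)$ would serve as well as the paper's piecewise-linear cutoff $\chi(t)t^{1/2}$ --- that difference is cosmetic; the essential missing idea is the replacement of $r$ by the smooth compactification coordinate $\rho$ throughout both parts.
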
 
\begin{rmk}\ \ 
Let $\overline{M}$ be a compact $C^{\infty}$ $n$-manifold with boundary (possibly disconnected) 
and $M$ its interior. 
Then, a metric $g$ on $M$ is said to be   
{\it asymptotically hyperbolic of class} $C^2$ 
if $g$ satisfies the following: 
There exists a {\it defining function} $\lambda \in C^{\infty}(\overline{M})$ of $\partial \overline{M}$ 
such that the conformally rescaled metric $\overline{g} := \lambda^2 g$ has a $C^2$-extention on $\overline{M}$  
and that $|d \lambda|^2_{\overline{g}} = 1$ on $\partial \overline{M}$ (cf.~\cite{Ma, Gr-Le, Le-book}). 
R.~Mazzeo~\cite{Ma} proved that such $(M, g)$ is complete 
and has sectional curvatures uniformly approaching $- 1$ near $\partial M$. 
Moreover, 
$\sigma_{\rm ess}(- \Delta_g) = [\frac{(n-1)^2}{4}, \infty)$ \ (cf.~\cite{An, Ku}). 
\end{rmk}

In the next section, we first introduce Hardy's inequality. 
We then prove Theorem~\ref{Thm-UPLG}. 
Theorem~\ref{Thm-UPLG}, that is, the {\it uncertainty principle lemma under gravity} 
arises from essentially this inequality. 
Using this theorem, we also prove the other main results mentioned above, 
including Theorem~RSK with a simple proof. 
In Section~3, we give one more application of Thorem~1.1 
and an interesting example of a noncompact manifold on which this theorem holds. \\  
{\bf Acknowledgements.} 
The authors would like to thank Hideo Tamura, Hiroshi Isozaki, Rafe Mazzeo and Richard Schoen for helpful discussions.  
\quad \\ 

\section{Hardy's Inequality and Proof of Main Results} 

We first recall the following Hardy's inequality~\cite[Theorem~327]{Ha-Li-Po-book}, 
which is an essential source of the inequalities in (1) and (2). 

\begin{thmmmm}\label{Thm-Hardy}\ \ 
For any $f \in C_c^1\big{(}[0, \infty)\big{)}$ and $R > 0$, the following holds. 
\begin{equation}
\int_R^{\infty} |f'(t)|^2 dt \geq \int_R^{\infty} \frac{f(t)^2}{4 t^2} dt - \frac{f(R)^2}{2 R}. 
\end{equation} 
\end{thmmmm} 
\quad \\ 
{\it Proof of Theorem~\ref{Thm-UPLG}}. 
Let $y = (y^2, \cdots, y^n)$ be local coordinates on an open subset $U$ of $W$. 
Under the identification $E \cong \mathcal{N}^+(W)$ by the outward normal exponential map, 
then the metric $g$ can be described as 
$$
g(r, y) = dr^2 + g^W_{\alpha \beta}(r, y)dy^{\alpha}dy^{\beta}\quad 
{\rm on}\quad [R, \infty) \times U\qquad (2 \leq \alpha, \beta \leq n), 
$$ 
where $(\rho, y)$ are the {\it Fermi coordinates} on $[0, \infty) \times U$ and $r := \rho + R$.  
For each $u \in C_c^{\infty}(M)$, we have on $[R, \infty) \times \{y\}$ \ \ ($y \in U$) 
\begin{align} 
& \int_R^{\infty} |\partial_r u|^2 \sqrt{g^W} dr \\ 
= \int_R^{\infty}&|\partial_r(\sqrt{g^W}^{\frac{1}{2}}u)|^2 dr 
+ \int_R^{\infty} \Big{\{} \frac{1}{4}(\Delta_g r)^2 - \frac{1}{2}|\nabla dr|^2 
- \frac{1}{2}{\rm Ric}_g(\nabla r, \nabla r) \Big{\}} u^2 \sqrt{g^W} dr \notag \\ 
&\qquad \qquad \qquad \quad \ + \frac{1}{2} \Big{(} \Delta_g r\cdot u^2\cdot \sqrt{g^W} \Big{)} \Big{|}_{(R, y)} \notag. 
\end{align} 
where $\sqrt{g^W} := \sqrt{\textrm{det}(g^W_{\alpha \beta}(r, y))}$. 

The proof of the identity (8) is the below: 
A direct computation shows that on the interval $[R, \infty) \times \{y\}$
\begin{align}
& \int_R^{\infty} |\partial_r(\sqrt{g^W}^{\frac{1}{2}}u)|^2 dr \\ 
= & \ \frac{1}{4} \int_R^{\infty} \frac{\big{(}\partial_r\sqrt{g^W}\big{)}^2}{\sqrt{g^W}} u^2 dr 
+ \frac{1}{2} \int_R^{\infty} \big{(}\partial_r\sqrt{g^W}\big{)} \partial_r (u^2) dr 
+ \int_R^{\infty} \sqrt{g^W} |\partial_r u|^2 dr. \notag 
\end{align}  
Using integration by parts, we can calculate 
the second term of the right hand side of the above as 
\begin{align} 
\frac{1}{2} \int_R^{\infty} \big{(}\partial_r\sqrt{g^W}\big{)} \partial_r (u^2)& dr 
= \frac{1}{2} \big{[} \big{(}\partial_r\sqrt{g^W}\big{)} u^2 \big{]}_{r = R}^{r = \infty} 
- \frac{1}{2} \int_R^{\infty} \big{(}\partial^2_r\sqrt{g^W}\big{)} u^2 dr \\ 
& = - \frac{1}{2} \int_R^{\infty} \big{(}\partial^2_r\sqrt{g^W}\big{)} u^2 dr 
- \frac{1}{2} \Big{(} \big{(}\partial_r\sqrt{g^W}\big{)} u^2 \Big{)} \Big{|}_{(R, y)}. \notag 
\end{align}  
Since $\Delta_g r = \frac{\partial_r\sqrt{g^W}}{\sqrt{g^W}}$ and 
$- \partial_r (\Delta_g r) = |\nabla dr|^2 + \textrm{Ric}_g(\nabla r, \nabla r)$, 
we now get the following two identities 
\begin{align*} 
& \partial_r^2 \sqrt{g^W} = - \big{\{} |\nabla dr|^2 + {\rm Ric}_g(\nabla r, \nabla r) \big{\}} \sqrt{g^W} 
+ (\Delta_g r)^2 \sqrt{g^W}, \\ 
& \frac{\big{(}\partial_r \sqrt{g^W}\big{)}^2}{\sqrt{g^W}} = (\Delta_g r)^2 \sqrt{g^W}. 
\end{align*} 
These two identities combined with (9) and (10) imply the identity (8). 

Hardy's inequality (7) implies the following 
\begin{equation}  
\int_R^{\infty} |\partial_r \big{(} \sqrt{g^W}^{\frac{1}{2}} u \big{)} |^2 dr 
\geq \int_R^{\infty} \frac{u^2}{4 r^2} \sqrt{g^W} dr 
- \frac{1}{2} \Big{(} \frac{u^2}{R}\sqrt{g^W} \Big{)} \Big{|}_{(R, y)}.  
\end{equation}  
Note also that, in terms of the coordinates $(r, y)$ on 
$[R, \infty) \times U \ (\ \subset [R, \infty) \times W\ )$,  
the volume element $dv_g$ can be expressed as 
\begin{equation*} 
dv_g(r, y) = dr~d\sigma_g(r, y) = \sqrt{g^W(r, y)}~dr~dy^2 \cdots dy^n. 
\end{equation*} 
Substituting the Hardy's inequality (11) into the identity (8) 
combined with $|\nabla u|^2 \geq |\partial_r u|^2$ 
and integrating its both sides over $W$ 
(locally with respect to $dy^2 \cdots dy^n$), 
we then get the inequality (2). 

Now, we assume that $(M, g)$ has a pole $p_0 \in M$. 
For each small $\varepsilon > 0$, 
set $E := M - B_{\varepsilon}(p_0)$ in the inequality (2), 
where $B_{\varepsilon}(p_0)$ denotes the open geodesic ball of radius $\varepsilon$ centered at $p_0$. 
Letting $\varepsilon \searrow 0$ in the integration over $W = \partial B_{\varepsilon}(p_0)$ of (2), 
we then have 
$$ 
\frac{1}{2} \int_{\partial B_{\varepsilon}(p_0)} 
\Big{(} \Delta_g r - \frac{1}{\varepsilon} \Big{)} u^2 d\sigma_g 
= \frac{n - 2}{2}~\sigma_{n-1} \cdot u(p_0)^2\cdot \varepsilon^{n-2} + O(\varepsilon^{n-1}) \rightarrow 0, 
$$ 
where $\sigma_{n-1}$ denotes the $(n-1)$-dimensional volume $\textrm{Vol}(S^{n-1}(1))$ of 
the unit $(n-1)$-sphere $S^{n-1}(1)$ of $\mathbb{R}^n$. 
Combining this with (2), we obtain the inequality (3). \qed \\ 
\quad \\ 
{\it Proof of Theorem~\ref{Thm-UPLH}}. 
\ \ On the space $\mathbb{H}^n(- \kappa)$, we have 
\begin{align*} 
& \Delta_{g_\kappa} r = (n-1)\sqrt{\kappa}~{\rm coth} (\sqrt{\kappa} r),\qquad  
|\nabla dr|^2 = (n-1)\kappa~{\rm coth}^2 (\sqrt{\kappa} r), \\ 
& {\rm Ric}_{g_{\kappa}}(\nabla r, \nabla r) = - (n-1) \kappa. 
\end{align*} 
Applying the inequalities (2) and (3) to the space $\mathbb{H}^n(- \kappa)$ 
combined with these identities and (4), 
we obtain the inequalities (5) and (6). \qed \\  

For both the self-containedness and the later use on Proofs of Theorems~\ref{Thm-H}, \ref{Thm-ALE}, \ref{Thm-AH}, 
we give here a simple proof of Theorem~RSK, particularly the finiteness assertion (i) 
by using the inequality (2), not the original inequality (1). 
For the proof of the infiniteness assertion~(ii), 
we also give a unified view of the mechanism for constructing a {\it nice test function} 
on each rotationally symmetric Riemannian $n$-manifold $(\mathbb{R}^n, dr^2 + h(r)^2\cdot g_{S^{n-1}(1)})$. 
Here, $g_{S^{n-1}(1)}$ denotes the standard metric of constant curvature $1$ on $S^{n-1}(1)$. \\ 
\quad \\ 
{\it Proof of Theorem~RSK}.\ \ \underline{Assertion~(i)}. 
\ \ Suppose that $\sigma_{\textrm{disc}}(- \Delta + V)$ is infinite. 
Then, there exist a family $\{\lambda_i\}_{i=1}^{\infty}$ of {\it negative} eigenvalues 
and the corresponding eigenfunctions $\{\varphi_i\}_{i=1}^{\infty}$ satisfying 
$$ 
\lambda_1 < \lambda_2 \leq \lambda_3 \leq \cdots \leq \lambda_k \leq \cdots \nearrow 0,\qquad 
\int_{\mathbb{R}^n} \varphi_i \varphi_j dx = \delta_{ij}\quad {\rm for\ all}\quad i, j \geq 1. 
$$ 
Decompose $\mathbb{R}^n$ into the two pieces as 
$$ 
\mathbb{R}^n = \overline{B_{R_0}({\bf 0})}\ \cup\ \big{(} \mathbb{R}^n - B_{R_0}({\bf 0}) \big{)}. 
$$ 
We consider the Neumann eigenvalue problem for $- \Delta + V$ 
on both $\overline{B_{R_0}({\bf 0})}$ and $\mathbb{R}^n - B_{R_0}({\bf 0})$. 
Arrange {\it all} the eigenvalues of $\overline{B_{R_0}({\bf 0})}$ and $\mathbb{R}^n - B_{R_0}({\bf 0})$ 
in increasing order, with repetition according to multiplicity: 
$$ 
\mu_1 \leq \mu_2 \leq \cdots . 
$$ 
From {\it Domain monotonicity of eigenvalues} ({\it vanishing Neumann data}) (cf.~\cite{Ch-book}), 
we have 
\begin{align}  
\mu_i \leq \lambda_i\ (\ < 0\ )\qquad {\rm for\ all}\quad i \geq 1. 
\end{align} 

Now applying the inequality~(2) to $\mathbb{R}^n - B_{R_0}({\bf 0})$ combined with Remark~1.2-(ii), (iii), 
we then obtain 
$$   
\int_{\mathbb{R}^n - B_{R_0}({\bf 0})} |\nabla u|^2 dx 
\geq \frac{(n-2)^2}{4} \int_{\mathbb{R}^n - B_{R_0}({\bf 0})} \frac{u^2}{r^2} dx\qquad 
{\rm for}\quad u \in C_c^{\infty}(\mathbb{R}^n). 
$$ 
Therefore, the assumption for the potential $V$ implies 
$$ 
\int_{\mathbb{R}^n - B_{R_0}({\bf 0})} \big{(} |\nabla u|^2 + V u^2 \Big{)} dx 
\geq 0\qquad {\rm for}\quad u \in C_c^{\infty}(\mathbb{R}^n), 
$$ 
and hence the Schr\"odinger operator $- \Delta + V$ on $\mathbb{R}^n - B_{R_0}({\bf 0})$ 
(with Neumann boundary condition) 
has no negative eigenvalue. 
On the other hand, since $\overline{B_{R_0}({\bf 0})}$ is compact, 
$- \Delta + V$ on $\overline{B_{R_0}({\bf 0})}$ (with Neumann boundary condition) has 
only a finite number of negative eigenvalues. 
These facts combined with (12) contradict that 
$\sigma_{\textrm{disc}}(- \Delta + V)$ is infinite. \\ 
\underline{Assertion~(ii)}. 
\ \ First, we prove the following. 
\begin{lemm}\label{Sub}  
Let $A$ be a Schr\"odinger operator $- \frac{d^2}{dt^2} + V(t)$ 
with its domain $C_c^{\infty}(0, \infty)$ and $V \in C^0(0, \infty)$. 
Assume that $A$ has a self-adjoint extension $\widehat{A}$ on $L^2((0, \infty); dt)$ 
and that $\sigma_{\rm ess}(\widehat{A}) = [0, \infty)$. 
Assume also that, there exist $\widetilde{\delta} > 0$ and $\widetilde{R} > 0$ such that 
$V$ satisfies 
$$ 
V(t) \leq - (1 + \widetilde{\delta}) \frac{1}{4 t^2}\qquad{\rm for}\quad t \geq \widetilde{R}. 
$$ 
Then, $\sigma_{\rm disc}(\widehat{A})$ is infinite.  
\end{lemm}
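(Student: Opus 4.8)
The plan is to prove infiniteness of $\sigma_{\mathrm{disc}}(\widehat A)$ via the variational characterization of eigenvalues lying below the bottom $0$ of the essential spectrum. Concretely, I would establish the reduction: \emph{if for every $m \in \mathbb{N}$ there is an $m$-dimensional subspace $\mathcal L_m \subset C_c^\infty(0,\infty)$ on which the quadratic form
$$
Q[u] := \langle \widehat A u, u\rangle_{L^2} = \int_0^\infty \big( |u'(t)|^2 + V(t)\,u(t)^2 \big)\,dt
$$
is negative definite, then $\sigma_{\mathrm{disc}}(\widehat A)$ is infinite.} Here I use that $C_c^\infty(0,\infty) \subset \mathrm{Dom}(\widehat A)$ with $\widehat A u = -u'' + Vu$, so $Q[u]$ is obtained from $\langle\widehat A u,u\rangle$ by a single integration by parts with vanishing boundary terms. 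The reduction follows from the spectral theorem: writing $E_{(-\infty,0)}$ and $E_{[0,\infty)}$ for the spectral projections of $\widehat A$, a dimension count shows that if $\dim \mathrm{Ran}\,E_{(-\infty,0)} < m$ then the map $\mathcal L_m \to \mathrm{Ran}\,E_{(-\infty,0)}$, $u \mapsto E_{(-\infty,0)}u$, has nontrivial kernel, producing $u \in \mathcal L_m\setminus\{0\}$ with $u = E_{[0,\infty)}u \in \mathrm{Dom}(\widehat A)$; for such $u$, $\langle \widehat A u, u\rangle = \int_{[0,\infty)}\lambda\,d\langle E_\lambda u,u\rangle \ge 0$, contradicting negative definiteness. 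Since $\sigma_{\mathrm{ess}}(\widehat A) = [0,\infty)$, the spectrum in $(-\infty,0)$ is purely discrete, so $\dim \mathrm{Ran}\,E_{(-\infty,0)} \ge m$ yields at least $m$ discrete eigenvalues; letting $m\to\infty$ finishes.

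It remains to construct the subspaces $\mathcal L_m$, and here the strictness factor $(1+\widetilde\delta)$ over the Hardy borderline is exactly what is needed. The key device is the logarithmic substitution adapted to the scaling of $1/t^2$: I set $u(t) = \sqrt t\,\psi(\log t)$ with $\psi \in C_c^\infty(\mathbb{R})$ supported in $[\log\widetilde R,\infty)$, so that $u \in C_c^\infty(0,\infty)$ is supported where the hypothesis $V(t)\le -(1+\widetilde\delta)/(4t^2)$ holds. Writing $s=\log t$, a direct computation (the cross term integrates to zero by compact support) gives
\begin{align*}
\int_0^\infty |u'|^2\,dt &= \int_{\mathbb{R}} \Big( \tfrac14 \psi(s)^2 + \psi'(s)^2 \Big)\,ds, &
\int_0^\infty \frac{u^2}{t^2}\,dt &= \int_{\mathbb{R}} \psi(s)^2\,ds .
\end{align*}
Feeding the potential bound into $Q$ then yields the clean estimate
$$
Q[u] \le \int_{\mathbb{R}} \psi'(s)^2\,ds - \frac{\widetilde\delta}{4}\int_{\mathbb{R}}\psi(s)^2\,ds,
$$
in which the dangerous $\tfrac14\psi^2$ term produced by the $\sqrt t$ weight is exactly cancelled by the Hardy contribution, leaving only the genuinely negative $-\tfrac{\widetilde\delta}{4}$ term to exploit.

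To make $Q[u]$ strictly negative I would fix a single bump $\psi_0 \in C_c^\infty(\mathbb{R})$ supported in an interval of length $\ell$ and observe that, by rescaling $\ell$, the Rayleigh quotient $\int \psi_0'^2 \big/ \int \psi_0^2$ can be made smaller than $\widetilde\delta/4$; for such $\psi_0$ one gets $Q[u_0] < 0$. To pass from one test function to an $m$-dimensional family I place $m$ disjoint translates of this bump on intervals $[s_j, s_j+\ell]$, all lying beyond $\log\widetilde R$, and set $u_j(t) = \sqrt t\,\psi_0(\log t - s_j)$. Because the $u_j$ have pairwise disjoint supports they are $L^2$-orthogonal and $Q\big[\sum_j c_j u_j\big] = \sum_j c_j^2\, Q[u_j] < 0$ for every nonzero coefficient vector, so $Q$ is negative definite on $\mathcal L_m := \mathrm{span}\{u_1,\dots,u_m\}$. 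This furnishes the required subspaces for all $m$.

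I expect the only genuinely delicate point to be the variational reduction in the first paragraph: one must ensure the test functions lie in $\mathrm{Dom}(\widehat A)$ (not merely in a form domain) so that $\langle \widehat A u, u\rangle$ literally equals $Q[u]$, and the identification of $\mathrm{Ran}\,E_{(-\infty,0)}$ with a span of genuine discrete eigenfunctions uses $\sigma_{\mathrm{ess}}(\widehat A)=[0,\infty)$ in an essential way. By contrast, the log-substitution computation and the bump construction are routine once the substitution is in place.
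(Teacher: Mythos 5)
Your proposal is correct and takes essentially the same route as the paper: both build infinitely many disjointly supported test functions of the form $t^{1/2}$ times a cutoff localized far out in $(\widetilde R,\infty)$, exploit the exact cancellation of the borderline Hardy term $\tfrac{1}{4t^2}$ so that only the surplus $-\tfrac{\widetilde\delta}{4}$ survives against a bounded kinetic cost, and conclude via the min-max principle using $\sigma_{\rm ess}(\widehat A)=[0,\infty)$. Your logarithmic substitution $s=\log t$ with smooth bumps is just a clean repackaging of the paper's explicit piecewise-linear cutoff computation (where the same mechanism appears as the bound $3-\tfrac{\widetilde\delta}{4}\log\bigl(\tfrac{k}{2}\bigr)$, the interval $[2R,kR]$ having length $\log\tfrac{k}{2}$ in the variable $s$), with the minor benefit that your test functions lie in $C_c^{\infty}(0,\infty)\subset\mathrm{Dom}(\widehat A)$, making the identification of $\langle\widehat A u,u\rangle$ with the quadratic form immediate.
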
 
\noindent 
{\it Proof of Lemma~\ref{Sub}}.\ \ 
For a fixed $R \geq \widetilde{R}$, we define a cut-off function $\chi(t)$ as follows: 
$$ 
\chi(t) := 
\begin{cases} 
\ \ 0\quad & {\rm if}\quad t \in [0, R], \\ 
\ \ \frac{1}{R}(t - R)\quad & {\rm if}\quad t \in [R, 2R], \\ 
\ \ 1\quad & {\rm if}\quad t \in [2R, kR], \\ 
\ \ - \frac{1}{kR}(t - 2kR)\quad & {\rm if}\quad t \in [kR, 2kR], \\ 
\ \ 0\quad & {\rm if}\quad t \in [2kR, \infty), \\ 
\end{cases} 
$$ 
where $k$ is a large positive constant defined later. 
Set $\varphi(t) := \chi(t) t^{\frac{1}{2}}$ for $t > 0$. 
Then, the direct computation shows that 
$$ 
|\varphi'(t)|^2 + V(t) \varphi(t)^2 
= |\chi'(t)|^2 t + \Big{(} V(t) + \frac{1}{4 t^2} \Big{)} \varphi(t)^2 + \frac{1}{2} (\chi(t)^2)'. 
$$ 
Integrating the both sides over $(0, \infty)$, we have 
\begin{align*} 
\int_0^{\infty} \{ |\varphi'|^2 + V \varphi^2 \} dt 
= &\ \int_0^{\infty} |\chi'(t)|^2 t dt + \int_0^{\infty} \Big{(} V + \frac{1}{4 t^2} \Big{)} \varphi^2 dt \\  
\leq &\ \int_0^{\infty} |\chi'(t)|^2 t dt - \frac{\widetilde{\delta}}{4} \int_0^{\infty} \frac{\chi^2}{t} dt \\ 
\leq &\ \frac{1}{R^2} \int_R^{2R} t dt + \frac{1}{(kR)^2} \int_{kR}^{2kR} t dt 
- \frac{\widetilde{\delta}}{4} \int_{2R}^{kR} \frac{\chi^2}{t} dt \\ 
= &\ 3 - \frac{\widetilde{\delta}}{4} \log\Big{(} \frac{k}{2} \Big{)}. 
\end{align*} 
Hence, there exists a large positive constant $k_0 = k_0(\delta, n)$ such that 
$$ 
\int_0^{\infty} \{ |\varphi'|^2 + V \varphi^2 \} dt < 0\qquad {\rm if}\quad k \geq k_0. 
$$ 

Successively choosing $R$ and $k$, 
we then get a sequence $\{\varphi_i\}_{i=1}^{\infty}$ of functions in $W^{1,2}((0,\infty))$ 
with compact support such that 
\begin{align*}  
&\ \ {\rm supp}~\varphi_i~\cap~{\rm supp}~\varphi_j = \emptyset\qquad {\rm if}\quad i \ne j, \\ 
&\int_0^{\infty} \{ |\varphi_i'|^2 + V \varphi_i^2 \} dt < 0\qquad {\rm for\ all}\quad i \geq 1. 
\end{align*} 
Therefore, the min-max principle implies that $\sigma_{\textrm{disc}}(\widehat{A})$ is infinite. \qed \\ 

Next, we consider a rotationally symmetric Riemannian $n$-manifold 
\begin{align} 
(M, g) := (\mathbb{R}^n, dr^2 + h(r)^2\cdot g_{S^{n-1}(1)}). 
\end{align}  
The restriction $(- \Delta_g)|_{\rm radial}$ of $- \Delta_g$ to the space of radial functions on $M$ 
is given by 
$$ 
(- \Delta_g)|_{\rm radial} = - \frac{d^2}{dr^2} - (n-1) \frac{h'(r)}{h(r)} \frac{d}{dr}\qquad 
{\rm on}\quad L^2((0, \infty); h(r)^{n-1} dr).  
$$ 
Now we define a {\it unitary} operator $U$ as 
$$ 
U : L^2((0, \infty); h(r)^{n-1} dr) \ni \psi \longmapsto h^{\frac{n-1}{2}} \psi \in L^2((0, \infty); dr). 
$$
Then, the self-adjoint operator $(- \Delta_g)|_{\rm radial}$ on $L^2((0, \infty); h(r)^{n-1} dr)$ 
is unitary equivalent to 
the self-adjoint operator $L_g := U \circ (- \Delta_g)|_{\rm radial} \circ U^{-1}$ on $L^2((0, \infty); dr)$. 
Remark also that the multiplication operator $V : \psi \mapsto V\cdot \psi$ 
is conserved under this unitary transformation, that is, $U \circ V \circ U^{-1} = V$. 
For $u \in {\rm Dom}(L_g)$, we can calculate $L_gu$ explicitly as 
\begin{align*} 
L_gu &= \Big{(} h^{\frac{n-1}{2}}\circ (- \Delta_g)|_{\rm radial} \Big{)} (h^{-\frac{n-1}{2}} u) \\ 
&= - \frac{d^2u}{dr^2} + \Big{\{} \frac{(n-1)(n-3)}{4} \Big{(}\frac{h'}{h}\Big{)}^2 + \frac{n-1}{2} \frac{h''}{h} \Big{\}} u. \notag 
\end{align*} 

With these understandings, we will now return the construction of a desired test function on $\mathbb{R}^n$. 
Note that, since the upper bound for the potential $V(x)$ on $\{r = |x| \geq R_1\}$ 
is given by the radial function $- (1+\delta)\frac{(n-2)^2}{4 r^2}$, 
hence we may assume that $V$ is also a radial function on $\mathbb{R}^n$. 
So, we will construct our desired test function as a radial function. 
Since the Euclidian $n$-space $\mathbb{R}^n$ is given by (13) with $h(r) := r$, 
the potential term of $L_g$ is 
$$ 
\frac{(n-1)(n-3)}{4} \Big{(}\frac{h'}{h}\Big{)}^2 + \frac{n-1}{2} \frac{h''}{h} 
= \frac{(n-1)(n-3)}{4 r^2},  
$$ 
and hence the potential of the operator $U \circ \Big{(} (- \Delta_g)|_{\rm radial} + V(r) \Big{)} \circ U^{-1}$ 
is given by  
$$ 
\frac{(n-1)(n-3)}{4 r^2} + V(r).  
$$ 
If we set $\widetilde{\delta} := (n-2)^2 \delta > 0$ in Lemma~\ref{Sub}, 
the condition for the potential of the operator $U \circ \Big{(} (- \Delta_g)|_{\rm radial} + V(r) \Big{)} \circ U^{-1}$ 
$$ 
\frac{(n-1)(n-3)}{4 r^2} + V(r) \leq - (1 + \widetilde{\delta}) \frac{1}{4 r^2}\qquad {\rm for}\quad r \geq R_1 
$$  
is equivalent to that for the potential $V(r)$ 
$$ 
V(r) \leq - (1 + \delta) \frac{(n-2)^2}{4 r^2}\qquad {\rm for}\quad r \geq R_1.  
$$ 
For $\varphi(r) = \chi(r) r^{\frac{1}{2}}$ defined in Proof of Lemma~\ref{Sub}, 
its unitary transformation is 
$$ 
U^{-1} \varphi(r) = h^{-\frac{n-1}{2}} \varphi(r) 
= r^{-\frac{n-1}{2}} \chi(r) r^{\frac{1}{2}} = \chi(r) r^{-\frac{n-2}{2}}. 
$$ 
Setting $\phi(x) := U^{-1} \varphi(r(x)) = \chi(r(x)) r(x)^{-\frac{n-2}{2}}$ for $x \in \mathbb{R}^n$, 
we then get, by Lemma~\ref{Sub},  
\begin{align*} 
\int_{\mathbb{R}^n} \{ |\nabla \phi|^2 + V \phi^2 \} dx 
& = \sigma_{n-1} \int_0^{\infty} \Big{\{} |\varphi'|^2 + \Big{(} V + \frac{(n-1)(n-3)}{4 t^2} \Big{)} \varphi^2 \Big{\}} dt \\ 
& \leq \sigma_{n-1} \Big{\{} 3 - \frac{(n-2)^2}{4} \delta \log\Big{(}\frac{k}{2}\Big{)} \Big{\}} < 0\qquad 
{\rm if}\quad k \geq k_0. 
\end{align*} 
For each $i \geq 1$, by setting also a function $\phi_i(x) := U^{-1} \varphi_i(r(x))$ in $W^{1,2}(\mathbb{R}^n)$ with compact support, 
then the sequence $\{\phi_i\}_{i=1}^{\infty}$ satisfies the following: 
\begin{align*} 
&\ \ {\rm supp}~\phi_i~\cap~{\rm supp}~\phi_j = \emptyset\qquad {\rm if}\quad i \ne j, \\ 
&\int_{\mathbb{R}^n} \{ |\nabla \phi_i|^2 + V \phi_i^2 \} dx < 0\qquad {\rm for\ all}\quad i \geq 1. 
\end{align*} 
Here, $\varphi_i$ is the function defined in Proof of Lemma~\ref{Sub}. 
Therefore, the min-max principle implies that $\sigma_{\textrm{disc}}(- \Delta + V)$ is infinite. \qed \\ 
\quad \\ 
{\it Proof of Theorem~\ref{Thm-H}}.\ \ \underline{Assertion~(i)}. 
\ \ By using Theorem~\ref{Thm-UPLH}, 
the proof of the finiteness assertion~(i) is similar to that of Theorem~\ref{Thm-RSK}-(i). 
So we omit it. \\ 
\underline{Assertion~(ii)}. 
\ \ Without loss of generality, 
we may assume that $\kappa = 1$. 
First note that, 
the assertion-(ii) is equivalent to that 
$\sigma_{\rm disc}(- \Delta_{g_1} + V - \frac{(n-1)^2}{4})$ is infinite. 
Keeping the same notations as those in Proof of Lemma~\ref{Sub}, 
we will construct a test function on $\mathbb{H}^n(- 1)$. 
Similarly to the argument in Proof of Theorem~RSK-(ii), 
we may also assume that the potential $V$ is a radial function on $\mathbb{H}^n(- 1)$. 

Since $\mathbb{H}^n := (\mathbb{H}^n(- 1), g_1)$ is given by (13) with $h(r) := \sinh r$, 
the potential term of $L_{g_1}$ is 
\begin{align*}  
\frac{(n-1)(n-3)}{4} \Big{(}\frac{h'}{h}\Big{)}^2 + \frac{n-1}{2} \frac{h''}{h} 
&= \frac{(n-1)(n-3)}{4} \coth^2 r + \frac{n-1}{2}, \\  
&= \frac{(n-1)(n-3)}{4} \Big{(} 1 + \frac{1}{\sinh^2 r} \Big{)} + \frac{n-1}{2}, \\ 
&= \frac{(n-1)^2}{4} + \frac{(n-1)(n-3)}{4 \sinh^2 r}, 
\end{align*} 
and hence the potential of the operator 
$U \circ \Big{(} (- \Delta_{g_1})|_{\rm radial} + V(r) - \frac{(n-1)^2}{4} \Big{)} \circ U^{-1}$ 
is given by  
$$ 
\frac{(n-1)(n-3)}{4 \sinh^2 r} + V(r).  
$$ 
If we set $\widetilde{\delta} := \frac{\delta}{2} > 0$ in Lemma~\ref{Sub}, 
the condition for the potential $V(r)$ 
$$ 
V(r) \leq - (1 + \delta) \frac{1}{4 r^2}\qquad {\rm for}\quad r \geq R_1   
$$ 
implies that for the potential of the operator 
$U \circ \Big{(} (- \Delta_{g_1})|_{\rm radial} + V(r) - \frac{(n-1)^2}{4} \Big{)} \circ U^{-1}$ 
$$ 
\frac{(n-1)(n-3)}{4 \sinh^2 r} + V(r) \leq - (1 + \widetilde{\delta}) \frac{1}{4 r^2}\qquad {\rm for}\quad r \geq \widetilde{R},  
$$ 
where $\widetilde{R} = \widetilde{R}(\delta, R_1, n) > 0$ is a large constant. 

For $\varphi(r) = \chi(r) r^{\frac{1}{2}}$ defined in Proof of Lemma~\ref{Sub}, 
its unitary transformation is 
$$ 
U^{-1} \varphi(r) = h^{-\frac{n-1}{2}} \varphi(r) 
= \chi(r) r^{\frac{1}{2}} (\sinh r)^{-\frac{n-1}{2}}. 
$$ 
Setting $\phi(x) := U^{-1} \varphi(r(x)) = \chi(r(x)) r(x)^{\frac{1}{2}} (\sinh r(x))^{-\frac{n-1}{2}}$ 
for $x \in \mathbb{H}^n$, 
we then get 
$$ 
\int_{\mathbb{H}^n} \{ |\nabla \phi|^2 + V \phi^2 \} dv_{g_1}\\ 
\leq \sigma_{n-1} \Big{\{} 3 - \frac{\delta}{8} \log\Big{(}\frac{k}{2}\Big{)} \Big{\}} < 0\qquad 
{\rm if}\quad k \geq k_0. 
$$ 
For each $i \geq 1$, by setting also a function $\phi_i(x) := U^{-1} \varphi_i(r(x))$ in $W^{1,2}(\mathbb{H}^n)$ with compact support, 
then the sequence $\{\phi_i\}_{i=1}^{\infty}$ satisfies the following: 
\begin{align*} 
&\ \ {\rm supp}~\phi_i~\cap~{\rm supp}~\phi_j = \emptyset\qquad {\rm if}\quad i \ne j, \\ 
&\int_{\mathbb{H}^n} \{ |\nabla \phi_i|^2 + V \phi_i^2 \} dv_{g_1} < 0\qquad {\rm for\ all}\quad i \geq 1. 
\end{align*} 
Here, $\varphi_i$ is the function defined in Proof of Lemma~\ref{Sub}. 
Therefore, the min-max principle implies that 
$\sigma_{\textrm{disc}}(- \Delta_{g_1} + V - \frac{(n-1)^2}{4})$ is infinite. \qed \\ 
\quad \\ 
{\it Proof of Theorem~\ref{Thm-ALE}}. 
\ \ As mentioned before in Remark~1.7, 
$(M, g)$ is ALE of order $\tau~(0 < \tau < 1)$ with finitely many ends. 
In order to prove the assertions, it is enough to treat only the case that 
$(M, g)$ has only one end. 
Hence, there exists a relatively compact open set $\mathcal{O}$ such that 
$M - \mathcal{O}$ is connected and that it has {\it coordinates at infinity} 
$x = (x^1, \cdots, x^n)$; that is, 
there exist $R > 0$ and a finite subgroup $\Gamma \subset O(n)$ 
acting freely on $\mathbb{R}^n - B_R({\bf 0})$ 
such that $M - \mathcal{O}$ is diffeomorphic to $\big{(} \mathbb{R}^n - B_R({\bf 0}) \big{)} \big{/} \Gamma$ 
and, with respect to $x = (x^1, \cdots, x^n)$, the metric $g$ satisfies the following on $M - \mathcal{O}$:  
\begin{equation} 
g_{ij} = \delta_{ij} + O(|x|^{-\tau}),\qquad 
\partial_k g_{ij} = O(|x|^{-1-\tau}),\qquad 
\partial_k \partial_{\ell}g_{ij} = O(|x|^{-2-\tau}).  
\end{equation} 
It should be remarked that, even if $R > 0$ is sufficiently large, 
neither the outward normal exponential map for $\{ x \in M - \mathcal{O}\ \big{|}\ |x| \geq R \}$ 
nor the one for $\{ p \in M\ \big{|}\ r(p) \geq R \}$ (regarding each as the set $E$ in Theorem~\ref{Thm-UPLG}) 
may be diffeomorphism. 
So, Theorem~\ref{Thm-UPLG} can not be applied directly to ALE manifolds. 
However, we use 
$$ 
\rho(x) := |x| = \sqrt{\Sigma_{i=1}^{n} (x^i)^2}\qquad {\rm for}\quad x \in M - \mathcal{O}
$$ 
instead of $r(x) = {\rm dist}_g(x, p_0)$ in Proof of Theorem~\ref{Thm-UPLG}, 
and then we can apply this theorem to ALE manifolds in a sense of approximation. 

In terms of the polar coordinates $(\rho, y^2, \cdots, y^n)$ 
arising from coordinates at infinity $(x^1, \cdots, x^n)$, 
the asymptotic behavior (14) implies that the metric $g$ is given as the form on $M - \mathcal{O}$  
\begin{align}  
g(\rho, y) &= g_{\rho \rho}(\rho, y)d\rho^2 + g_{\alpha \beta}(\rho, y)dy^{\alpha}dy^{\beta} 
+ g_{\rho \beta}(\rho, \alpha)d\rho dy^{\alpha}\\ 
&= \ \big{(}1 + O(\rho^{-\tau})\big{)}d\rho^2 + \rho^2\big{(}g_{S^{n-1}(1)} + O(\rho^{-\tau})\big{)} + \rho~O(\rho^{-\tau})d\rho dy. \notag  
\end{align} 
From (14) also, 
there exists sufficiently large $R_2 \geq 2\cdot \max \{R_0, R_1\} > 0$ such that 
\begin{align} 
&\frac{1}{2} \rho(x) \leq r(x) \leq \frac{3}{2} \rho(x)\qquad {\rm for}\quad \rho(x) \geq R_2, \\ 
&r(x) = \rho(x) \big{(} 1 + O(\rho(x)^{-\tau}) \big{)}\qquad {\rm as}\quad \rho(x) \nearrow \infty. 
\end{align} \\ 
\underline{Assertion~(i)}. 
\ \ Set $\sqrt{g} := \sqrt{{\rm det}(g_{\alpha \beta}(\rho, y))}$. 
By (14)--(17), we get 
\begin{align*}  
&dv_g = \big{(} 1 + O(\rho^{-\tau}) \big{)} \sqrt{g}~d\rho~dy^2 \cdots dy^n, \\ 
&\Delta_g \rho = \frac{\partial_{\rho} \sqrt{g}}{\sqrt{g}} + O(\rho^{-1-\tau}),\qquad  
- \partial_{\rho} (\Delta_g \rho) = |\nabla d\rho|^2 + {\rm Ric}_g(\nabla \rho, \nabla \rho) + O(\rho^{-2-\tau}),
\end{align*}  
and hence 
\begin{align} 
&\partial_{\rho}^2 \sqrt{g} = - \{|\nabla d\rho|^2 + {\rm Ric}_g(\nabla \rho, \nabla \rho)\} \sqrt{g} 
+ (\Delta_g \rho)^2 \sqrt{g} + \sqrt{g}~O(\rho^{-2-\tau}), \\ 
&\frac{(\partial_{\rho} \sqrt{g})^2}{\sqrt{g}} = 
(\Delta_g \rho)^2 \sqrt{g} + \sqrt{g}~O(\rho^{-2-\tau}), \\ 
&\Delta _g \rho = \frac{n-1}{\rho} + O(\rho^{-1-\tau}), \\ 
&|\nabla d\rho|^2 = \frac{n-1}{\rho^2} + O(\rho^{-2-\tau}), \\ 
&{\rm Ric}_g(\nabla \rho, \nabla \rho) = O(\rho^{-2-\tau}). 
\end{align} 

With these understandings, modifying Proof of Theorem~\ref{Thm-UPLG}-(i) combined with (14)--(22), 
we have the following: 
For $R \geq R_2$ and $u \in C_c^{\infty}(M)$, 
\begin{align*} 
&\ \int_R^{\infty} |\nabla u|^2 \sqrt{g}d\rho \geq \big{(} 1 + O(R^{-\tau}) \big{)} \int_R^{\infty} |\partial_{\rho} u|^2 \sqrt{g}d\rho \\ 
= & \big{(} 1 + O(R^{-\tau}) \big{)} \int_R^{\infty} \Big{\{} \frac{1}{4 \rho^2} + \frac{1}{4} (\Delta_g \rho)^2 - \frac{1}{2} |\nabla d\rho|^2 
- \frac{1}{2}{\rm Ric}_g(\nabla \rho, \nabla \rho) + O(\rho^{-2-\tau}) \Big{\}} u^2 \sqrt{g} d\rho \\
& \qquad \qquad \ \ + \big{(} 1 + O(R^{-\tau}) \big{)} \frac{1}{2} \Big{(} (\Delta_g\rho - \frac{1}{R}) u^2 \sqrt{g} \Big{)}\Big{|}_{\rho = R} \\ 
\geq & \big{(} 1 + O(R^{-\tau}) \big{)} \int_R^{\infty} \Big{\{} \frac{(n-2)^2}{4 \rho^2} + O(\rho^{-2-\tau}) \Big{\}} u^2 \sqrt{g} d\rho \\ 
& \qquad \qquad \ \ + \big{(} 1 + O(R^{-\tau}) \big{)} \Big{(} \frac{n-2}{2R}  + O(R^{-1-\tau}) \Big{)} \big{(} u^2 \sqrt{g} \big{)}|_{\rho = R},  
\end{align*} 
and hence 
\begin{equation} 
\int_{\{\rho(x) \geq R\}} |\nabla u|^2 dv_g  
\geq \big{(} 1 + O(R^{-\tau}) \big{)} \int_{\{\rho(x) \geq R\}} \Big{\{} \frac{(n-2)^2}{4 \rho^2} + O(\rho^{-2-\tau}) \Big{\}} u^2 dv_g.   
\end{equation} 
Here, we use the assumption that $n \geq 3$. 
Modifying Proof of Theorem~RSK-(i) combined with (16), (17) and (23), 
we can choose large $R \geq R_2$ such that  
$$ 
\int_{\{\rho(x) \geq R\}} \{|\nabla u|^2 + V u^2\}dv_g \geq 0\qquad {\rm for}\quad u \in C_c^{\infty}(M).  
$$ 
This combined with the argument in Proof of Theorem~RSK-(i) completes the proof of Assertion~(i). \\ 
\underline{Assertion~(ii)}. 
\ \ We also modify Proof of Theorem~RSK-(ii) as below. 
Similarly to that, set 
$$ 
\phi(x) := 
\begin{cases} 
\ \ 0\quad &{\rm if}\quad x \in \mathcal{O}, \\ 
\ \ U^{-1}\varphi(\rho(x)) = \chi(\rho(x)) \rho(x)^{-\frac{n-2}{2}} \quad &{\rm if}\quad x \in M - \mathcal{O}, 
\end{cases} 
$$ 
where $\varphi, \chi$ and $U$ are same as those defined in Proof of Theorem~RSK-(ii). 
By (14)--(17), for large $R \geq R_2$, we get 
\begin{align*} 
\int_M \{|\nabla \phi|^2 + V \phi^2\} dv_g 
&\ = \int_{M - \mathcal{O}} \big{\{}|\partial_{\rho} \phi|^2 \big{(} 1 + O(\rho^{-\tau}) \big{)} + V \phi^2 \big{\}} 
\big{(} 1 + O(\rho^{-\tau}) \big{)} d\rho d\sigma_g \\ 
&\ \leq \frac{\sigma_{n-1}}{|\Gamma|} \Big{\{} 3 \big{(} 1 + O(R^{-\tau}) \big{)} 
- \frac{(n-2)^2}{4} \delta \Big{(} \log \Big{(}\frac{k}{2}\Big{)} - \frac{1}{\tau} O(R^{-\tau}) \Big{)} \Big{\}}, 
\end{align*} 
where $|\Gamma|$ denotes the order of $\Gamma$. 
Then, there exists sufficiently large $\widetilde{k}_0 = \widetilde{k}_0(\tau, n, R) > 0$ such that 
$$ 
\int_M \{|\nabla \phi|^2 + V \phi^2\} dv_g < 0 \qquad {\rm for}\quad k \geq \widetilde{k}_0. 
$$ 
The rest part is similar to that in Proof of Theorem~RSK-(ii). \qed \\ 
\quad \\ 
{\it Proof of Theorem~\ref{Thm-AH}}.\ \ 
Let us keep the same notations as those in Remark~1.9. 
Without loss of generality, 
we may assume that $\partial \overline{M}$ is connected. 
Since the conformally rescaled metric $\overline{g} = \lambda^2 g$ is a $C^2$ metric on $\overline{M}$, 
the inward normal $\overline{g}$-exponential map 
$\exp_{\partial \overline{M}} : \mathcal{N}^-(\partial \overline{M}) \rightarrow \overline{M}$ 
induces a $C^3$ diffeomorphism 
$$ 
[0, t_0] \times \partial \overline{M} 
\ni (t, q) \mapsto \exp_q (t \nu(q)) \in (\partial \overline{M} \sqcup T_{t_0}). 
$$ 
Here, 
\begin{align*}  
&\mathcal{N}^-(\partial \overline{M}) 
:= \{ v \in T\overline{M}|_{\partial \overline{M}}\ \big{|}\ v\ {\rm is\ inward\ normal\ to}\ \partial \overline{M} \}, \\ 
&T_{t_0} := \{ x \in M\ \big{|}\ {\rm dist}_{\overline{g}}(x, \partial \overline{M}) \leq t_0 \}, 
\end{align*} 
and $t_0 > 0$ is a small constant 
and that $\nu(q)$ denotes the inward unit normal vector at $q \in \partial \overline{M}$ with respect to $\overline{g}$. 
Under this identification combined with $|d\lambda|^2_{\overline{g}} = 1$ on $\partial \overline{M}$, 
the metric $g$ can be written in the form 
\begin{align} 
&g(t, q) = \frac{1}{\lambda(t,q)^2} \overline{g}(t, q) 
= \frac{1}{\lambda(t,q)^2} (dt^2 + \widehat{g}(t, q))\quad {\rm for}\quad (t, q) \in (0, t_0] \times \partial \overline{M},\\ 
&\lambda(t, q) = t + t^2\cdot f(t, q)\qquad {\rm with}\quad 
f \in C^2([0, t_0] \times \partial \overline{M}), 
\end{align}  
where $\{\widehat{g}(t, \bullet)\}_{t \in [0, t_0]}$ is a $C^2$ family of $C^2$ metrics on $\partial \overline{M}$. 
Set 
\begin{equation} 
\rho(t) := - \int_{t_0}^t \frac{ds}{s} = - \log\Big{(} \frac{t}{t_0} \Big{)}\quad 
\Big{(}\ \Leftrightarrow\ \ t(\rho) = t_0 e^{-\rho}\ \Big{)}. 
\end{equation} 
Then, we also get a a $C^3$ diffeomorphism 
$$ 
[0, \infty) \times \partial \overline{M} 
\ni (\rho, q) \mapsto \exp_q (t(\rho) \nu(q)) \in T_{t_0}. 
$$ 
Under these identifications $[0, \infty) \times \partial \overline{M} \cong (0, t_0] \times \partial \overline{M} \cong T_{t_0}$, 
from (24)--(26), the metric $g$ also can be written in the form 
\begin{align} 
&g(\rho, q) = \big{(} 1 + O''(e^{-\rho}) \big{)} d\rho^2 + e^{2\rho}~\widetilde{g}(\rho, q)\qquad 
{\rm for}\quad (\rho, q) \in [0, \infty) \times \partial \overline{M}, \\ 
&\widetilde{g}(\rho, q) := \frac{e^{-2\rho}}{\lambda(\rho, q)^2}~\widehat{g}(\rho, q) 
= \frac{1}{t_0^2} \big{(} 1 + O''(e^{-\rho}) \big{)} \widehat{g}(\rho, q), \notag 
\end{align} 
where $\psi := O''(e^{-\rho})$ means that $\psi = O(e^{-\rho}), \nabla \psi = O(e^{-\rho})$ 
and $\nabla^2 \psi = O(e^{-\rho})$. 
From (24)--(27), we note 
\begin{equation}  
{\rm dist}_g((\rho, q), \partial T_{t_0}) = - \int_{t_0}^t \frac{ds}{\lambda(s, q)} = \rho + O(1)\quad 
{\rm for}\quad (\rho, q) \in [0, \infty) \times \partial \overline{M}. 
\end{equation} 
Without loss of generality, we may assume that $p_0 \in M - T_{t_0}$. 
It then follows from (28) that 
\begin{equation} 
\rho(x) - C \leq r(x) = {\rm dist}_g(x, p_0) \leq \rho(x) + C,\qquad 
{\rm for}\quad x \in T_{t_0} 
\end{equation} 
for some constant $C > 0$. 

With these understandings, we can now give several estimates below for the proofs of the assertions (i), (ii). 
Let $y = (y^2, \cdots, y^n)$ be local coordinates on a coordinate open neighborhood $U$ of $\partial \overline{M}$. 
From (27), 
the metric $g$ can be written 
on $\{ (\rho, y) \in [0, \infty) \times U \} \subset [0, \infty) \times \partial \overline{M}$ in the form 
\begin{equation} 
g(\rho, q) = \big{(} 1 + O''(e^{-\rho}) \big{)} d\rho^2 + e^{2\rho}~\widetilde{g}_{\alpha \beta}(\rho, q)dy^{\alpha}dy^{\beta}
\qquad (2 \leq \alpha, \beta \leq n). 
\end{equation} 
Set $\sqrt{g} := e^{(n-1)\rho} \sqrt{{\rm det}(\widetilde{g}_{\alpha \beta})}$.  
By (24)--(27), 
we get  
\begin{align*} 
&dv_g = \big{(} 1 + O''(e^{-\rho}) \big{)} \sqrt{g}~d\rho~dy^2 \cdots dy^n\qquad 
{\rm on}\quad [0, \infty) \times U, \\ 
&\Delta_g \rho = \frac{\partial_{\rho} \sqrt{g}}{\sqrt{g}} + O'(e^{-\rho}),\qquad 
- \partial_{\rho}(\Delta_g \rho) = |\nabla d\rho|^2 + {\rm Ric}_g(\nabla \rho, \nabla \rho) + O(e^{-\rho}), 
\end{align*}  
where $\varphi := O'(e^{-\rho})$ also means that $\varphi = O(e^{-\rho})$ and $\nabla \varphi = O(e^{-\rho})$. 
By (24)--(27) combined with 
$\partial_{\rho} = t \partial_t = t_0 e^{-\rho} \partial_t$ 
and the curvature estimate in \cite[Proposition]{Ma} (cf.~\cite[Lemma~2.1]{Gr-Le}), 
we also obtain  
\begin{align} 
&\partial_{\rho}^2 \sqrt{g} = - \{|\nabla d\rho|^2 + {\rm Ric}_g(\nabla \rho, \nabla \rho)\} \sqrt{g} 
+ (\Delta_g \rho)^2 \sqrt{g} + \sqrt{g}~O(e^{-\rho}), \\ 
&\frac{(\partial_{\rho} \sqrt{g})^2}{\sqrt{g}} = 
(\Delta_g \rho)^2 \sqrt{g} + \sqrt{g}~O'(e^{-\rho}), \\ 
&\Delta_g \rho = (n - 1) + O'(e^{-\rho}), \\  
&|\nabla d\rho|^2 = (n -1) + O'(e^{-\rho}), \\ 
&{\rm Ric}_g(\nabla \rho, \nabla \rho) = - (n - 1) + O(e^{-\rho}). 
\end{align} 
Modifying Proof of Theorem~\ref{Thm-UPLG}-(i) combined with (24)--(27) and (30)--(35), 
we have the following: 
For sufficiently large $R \geq R_0$ and $u \in C_c^{\infty}(M)$, 
\begin{align*} 
&\ \int_R^{\infty} |\nabla u|^2 \big{(} 1 + O''(e^{-\rho}) \big{)} \sqrt{g}d\rho 
\geq \int_R^{\infty} |\partial_{\rho} u|^2 \big{(} 1 + O''(e^{-\rho}) \big{)} \sqrt{g}d\rho \\ 
= & \int_R^{\infty} \Big{\{} \frac{1}{4 \rho^2} + \frac{1}{4} (\Delta_g \rho)^2 - \frac{1}{2} |\nabla d\rho|^2 
- \frac{1}{2}{\rm Ric}_g(\nabla \rho, \nabla \rho) + O(e^{-\rho}) \Big{\}} \big{(} 1 + O(e^{-\rho}) \big{)} u^2 \sqrt{g} d\rho \\
& \qquad \qquad \qquad \quad + \big{(} 1 + O(e^{-R}) \big{)} \frac{1}{2} \Big{(} (\Delta_g\rho - \frac{1}{R}) u^2 \sqrt{g} \Big{)}\Big{|}_{\rho = R} \\ 
\geq & \int_R^{\infty} \Big{\{} \frac{1}{4 \rho^2} + \frac{(n-1)^2}{4}+ O(e^{-\rho}) \Big{\}} \big{(} 1 + O(e^{-\rho}) \big{)} u^2 \sqrt{g} d\rho \\ 
& \qquad \qquad \qquad \quad + \big{(} 1 + O(e^{-R}) \big{)} \Big{(} (n - 1) - \frac{1}{R}  + O(e^{-R}) \Big{)} \big{(} u^2 \sqrt{g} \big{)}|_{\rho = R}. 
\end{align*} 
Therefore,  
\begin{align} 
&\ \int_{\{\rho(x) \geq R\}} |\nabla u|^2 dv_g \\ 
\geq &\ \int_{\{\rho(x) \geq R\}} \Big{\{} \frac{(n-1)^2}{4} + \frac{1 + O(e^{-\rho})}{4 \rho^2} + O(e^{-\rho}) \Big{\}} u^2 dv_g. \notag   
\end{align} 

By the formulae (24)--(28), (30) and the estimates (29), (31)--(36), 
we can obtain the assertions (i), (ii) similarly to Proof of Theorem~\ref{Thm-ALE}. 
So we omit the rest of the proofs. 
\qed \\ 
\quad \\ 

\section{Further comments} 

In this section, we first give one more application of Thorem~1.1. 
As a matter of convenience, we introduce some terminology below. 
Let $(M, g)$ be a noncompact complete $n$-manifold. 
Assume that there exists an open subset $U$ of $M$ with compact $C^{\infty}$ boundary $\partial U$ 
such that the outward normal exponential map $\exp_{\partial U} : \mathcal{N}^{+}(\partial U) \to M-U$ is a diffeomorphism. 
Set $E = M-U$ and $r(\bullet) = {\rm dist}_g(\bullet, \partial U)$ on $E$. 
A tangent $2$-plane $\pi\subset T_xM$ at $x\in E$ is said to be {\it radial} 
if the plane $\pi$ contains $\nabla r$, 
and that the restriction of the sectional curvatures to all radial planes 
are called the {\it radial curvatures}. 
Using Hessian comparison theorem in Riemannian geometry, we get the following:
\begin{thm}
Let $(M, g)$ be a noncompact complete $n$-manifold with $n \geq 2$. 
Assume that there exists a relatively compact open subset $U$ of $M$ 
with compact $($possibly disconnnected\,$)$ $C^{\infty}$ boundary 
$\partial U$ such that the outward normal exponential map 
$\exp_{\partial U} : \mathcal{N}^{+}(\partial U) \to M - U$ is a diffeomorphism. 
Set $r(\bullet) := {\rm dist}_g(\bullet, \partial U)$ on $M - U$. 
Assume also that 
\begin{equation}  
\nabla dr \geq 0\qquad {\rm on}\quad \partial U, 
\end{equation} 
and the radial curvatures satisfy 
\begin{align} 
&\ ``~{\rm radial~curvatures}~" \leq 0\qquad{\rm on}\quad M - U, \\ 
&\ - (\kappa + \delta_1 r^{-2}) \leq ``~{\rm radial~curvatures}~" \leq - (\kappa + \delta_2 r^{-2})\qquad {\rm for}\quad r \geq R_0 
\end{align} 
for some positive constants $\kappa > 0, R_0 > 0$ and 
some constants $\delta_1, \delta_2$ with $\delta_1 \geq \delta_2$ $($not necessarily positive$)$. 
Moreover, assume that
$$
1 - (2n-5)\delta_1 + (n^2-4)\delta_2 > 0.
$$ 
Then, $\sigma_{{\rm ess}}(-\Delta_g)=[\frac{(n-1)^2\kappa}{4}, \infty)$ and that 
$\sigma_{{\rm disc}}(-\Delta_g)$ is finite. 
\end{thm}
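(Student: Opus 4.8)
The plan is to combine the Hessian comparison theorem with the uncertainty principle lemma under gravity (Theorem~\ref{Thm-UPLG}), reducing the whole statement to a pointwise lower bound on the integrand of inequality~(2). First I would set $E := M-U$, $r := \mathrm{dist}_g(\cdot,\partial U)$, and work in the Fermi coordinates $(r,y)$ used in the proof of Theorem~\ref{Thm-UPLG}. The radial curvature hypotheses are exactly the inputs of the Hessian comparison theorem: comparing the shape operator of the level hypersurfaces $r^{-1}(c)$ against the solutions $\phi_1,\phi_2$ of $\phi_j'' = (\kappa+\delta_j r^{-2})\phi_j$, and using $\nabla dr\ge 0$ on $\partial U$ to anchor the comparison at $r=0$, one gets two-sided bounds $m_2(r)\,\mathrm{Id}\le \nabla dr\le m_1(r)\,\mathrm{Id}$ on the level hypersurfaces, where $m_j := \phi_j'/\phi_j$. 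A Riccati analysis of $\phi_j''=(\kappa+\delta_j r^{-2})\phi_j$ (its logarithmic derivative is an attracting solution, independent of the initial data) yields $m_j(r) = \sqrt{\kappa} + \frac{\delta_j}{2\sqrt{\kappa}}\,r^{-2} + o(r^{-2})$; in particular $\Delta_g r\to(n-1)\sqrt{\kappa}$, $|\nabla dr|^2\to(n-1)\kappa$ and $\mathrm{Ric}_g(\nabla r,\nabla r)\to-(n-1)\kappa$, with the $r^{-2}$-corrections controlled by $\delta_1,\delta_2$.

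Next I would feed these estimates into the integrand
\[
\frac{1}{4r^2}+\frac14(\Delta_g r)^2-\frac12|\nabla dr|^2-\frac12\mathrm{Ric}_g(\nabla r,\nabla r)
\]
of inequality~(2), noting that the boundary term $\frac12\int_W(\Delta_g r-\frac1R)u^2\,d\sigma_g$ is nonnegative once $R$ is large, since $\Delta_g r\approx(n-1)\sqrt{\kappa}>0$ there. Bounding $\mathrm{Ric}_g(\nabla r,\nabla r)=\sum_i K_{\mathrm{rad},i}$ from above by the curvature upper bound, and the sign-indefinite combination $\frac14(\Delta_g r)^2-\frac12|\nabla dr|^2=\frac14(\sum_i\lambda_i)^2-\frac12\sum_i\lambda_i^2$ from below using the pointwise constraints $m_2\le\lambda_i\le m_1$ on the principal curvatures, the leading terms should combine to $\frac{(n-1)^2\kappa}{4}$ and the $r^{-2}$-terms to $\frac14\big(1-(2n-5)\delta_1+(n^2-4)\delta_2\big)r^{-2}$. (Equivalently, via the Riccati identity $-\partial_r(\Delta_g r)=|\nabla dr|^2+\mathrm{Ric}_g(\nabla r,\nabla r)$ from the proof of Theorem~\ref{Thm-UPLG}, the integrand equals $\frac{1}{4r^2}+\frac14(\Delta_g r)^2+\frac12\partial_r(\Delta_g r)$, and Laplacian comparison controls both terms.) The hypothesis $1-(2n-5)\delta_1+(n^2-4)\delta_2>0$ then makes the coefficient strictly positive, so that for some large $R$ and all $u\in C_c^\infty(M)$,
\[
\int_{\{r\ge R\}}|\nabla u|^2\,dv_g\ \ge\ \frac{(n-1)^2\kappa}{4}\int_{\{r\ge R\}}u^2\,dv_g .
\]
I expect this step — extracting the precise $r^{-2}$-coefficient from the indefinite quadratic in the $\lambda_i$ while retaining only the two-sided comparison — to be the main obstacle, since the bound is not attained at a single configuration of the $\lambda_i$ and is sensitive to the sign of $n-3$, so the case $n=2$ will likely need separate handling.

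For the essential spectrum, the relations $m_j\to\sqrt{\kappa}$ say that the ends are asymptotically hyperbolic of curvature $-\kappa$. The displayed inequality together with Persson's characterization of $\inf\sigma_{\mathrm{ess}}$ gives $\inf\sigma_{\mathrm{ess}}(-\Delta_g)\ge\frac{(n-1)^2\kappa}{4}$; for the reverse inclusion $[\frac{(n-1)^2\kappa}{4},\infty)\subseteq\sigma_{\mathrm{ess}}(-\Delta_g)$ I would construct Weyl singular sequences concentrating at infinity from the asymptotically hyperbolic normal form (as in the Remark following Theorem~\ref{Thm-AH}, after rescaling by $\kappa$), yielding $\sigma_{\mathrm{ess}}(-\Delta_g)=[\frac{(n-1)^2\kappa}{4},\infty)$.

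Finally, finiteness of $\sigma_{\mathrm{disc}}(-\Delta_g)$ — that is, finiteness of the eigenvalues below $\frac{(n-1)^2\kappa}{4}$ — follows exactly as in the Proof of Theorem~RSK-(i): decompose $M=\overline{\{r<R\}}\cup\{r\ge R\}$, apply domain monotonicity of eigenvalues (vanishing Neumann data), observe that the displayed inequality forbids Neumann eigenvalues below $\frac{(n-1)^2\kappa}{4}$ on the outer piece while the compact inner piece contributes only finitely many such eigenvalues, and conclude by the min-max principle. Thus the crux of the argument is the comparison-geometry bookkeeping of the second paragraph, with the spectral conclusions then being routine adaptations of the finiteness argument already given for Theorem~RSK and Theorem~\ref{Thm-H}.
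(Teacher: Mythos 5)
Your outline follows the same route as the paper's actual proof: Heintze--Karcher Hessian comparison anchored by $\nabla dr\ge 0$ on $\partial U$, giving two-sided bounds $t(r)\le\lambda_i\le T(r)$ on the principal curvatures of the level sets with $t,T=\sqrt{\kappa}+\frac{\delta_{2},\delta_1}{2\sqrt{\kappa}\,r^2}+o(r^{-2})$; substitution into inequality (2) of Theorem~1.1; Neumann bracketing as in the proof of Theorem~RSK-(i); and the essential spectrum from $\lim_{r\to\infty}\Delta_g r=(n-1)\sqrt{\kappa}$. (On that last point, the paper simply cites Kumura's theorem, which needs exactly this limit of $\Delta_g r$ and nothing more; your appeal to the asymptotically hyperbolic normal form of the Remark after Theorem~1.6 is not justified here, since only the \emph{radial} curvatures are pinched and no conformally compact structure is available -- replace that step by the citation or a radial quasimode construction.) The genuine gap is the step you explicitly defer: the lower bound for the indefinite quadratic $(\sum_i\lambda_i)^2-2\sum_i\lambda_i^2$ over the box $t\le\lambda_i\le T$ with the \emph{precise} coefficient $1-(2n-5)\delta_1+(n^2-4)\delta_2$. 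This is the entire nontrivial content of the theorem, and it does not follow from naive decoupling: using only $\sum_i\lambda_i\ge(n-1)t$ and $\sum_i\lambda_i^2\le(n-1)T^2$ yields the strictly stronger hypothesis $1-2(n-1)\delta_1+(n^2-1)\delta_2>0$, not the stated one.

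The paper closes this gap with a sharper mixed-configuration bound: for $0\le t\le\lambda_i\le T$,
\[
\Big(\sum_i\lambda_i\Big)^2-2\sum_i\lambda_i^2\ \ge\ \big\{T+(n-2)t\big\}^2-2\big\{(n-2)T^2+t^2\big\},
\]
followed by $2(n-2)tT\ge 2(n-2)t^2$, which gives $-(2n-5)T^2+(n^2-2n-2)t^2=(n-1)(n-3)\kappa+\{-(2n-5)\delta_1+(n^2-2n-2)\delta_2\}r^{-2}+o(r^{-2})$; adding $-\frac12\mathrm{Ric}_g(\nabla r,\nabla r)\ge\frac{n-1}{2}(\kappa+\delta_2 r^{-2})$ and the $\frac{1}{4r^2}$ term then produces exactly $\frac{(n-1)^2\kappa}{4}+\frac{1}{4r^2}\{1-(2n-5)\delta_1+(n^2-4)\delta_2\}+o(r^{-2})$. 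To verify the displayed bound, note the left side is concave in each $\lambda_i$, so its minimum over the box occurs at a vertex with $k$ entries equal to $T=t+s$; writing $f(k)$ for the vertex value and $B$ for the right side, one computes $f(k)-B=2(n-3)(k+1)st+(k^2-2k+2n-5)s^2\ge 0$ whenever $n\ge 3$. Your instinct that the minimum "is not attained at a single configuration" and that the sign of $n-3$ makes $n=2$ delicate is exactly right -- and in fact more right than the paper acknowledges: for $n=2$ the displayed bound fails (there is a single eigenvalue, $-\lambda^2\ge-T^2$, and $-T^2\ge T^2-2t^2$ would force $t\ge T$), and the direct computation gives the $r^{-2}$-coefficient $1-\delta_1+2\delta_2$ rather than the claimed $1+\delta_1$, so the case $n=2$ does require the separate treatment you anticipated. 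So: your skeleton is the paper's, but the decisive algebraic estimate must actually be supplied, and your Weyl-sequence step must be repaired as above.
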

\begin{proof}
Applying Hessian comparison theorem \cite{He-Ka} to $r$ combined with (37) and (38), 
we have 
\begin{equation} 
\nabla dr\Big{|}_{(\nabla r)^{\perp} \times (\nabla r)^{\perp}} \geq 0\qquad {\rm on}\quad M - U,  
\end{equation} 
and hence, by Hessian comparison theorem again combined with (38)--(40), 
$$ 
\Big{\{} \sqrt{\kappa} + \frac{\delta_2}{2\sqrt{\kappa}r^2} + o(r^{-2}) \Big{\}} 
\leq \nabla dr\Big{|}_{(\nabla r)^{\perp} \times (\nabla r)^{\perp}}
\leq \Big{\{} \sqrt{\kappa} + \frac{\delta_1}{2\sqrt{\kappa}r^2} + o(r^{-2}) \Big{\}}\qquad {\rm on}\quad M - U.
$$ 
In particular, $\lim_{r\to \infty}\Delta_g r=(n-1)\sqrt{\kappa}$, and hence 
$\sigma_{{\rm ess}}(-\Delta_g)=[\frac{(n-1)^2\kappa}{4}, \infty)$ 
(see \cite[Theorem~1.2]{Ku} for details). 

For the second assertion, we set  
$$ 
t = t(r) := \displaystyle\sqrt{\kappa} + \frac{\delta_2}{2\sqrt{\kappa} r^2} + o(r^{-2}) \geq 0\quad 
{\rm and}\quad 
T = T(r) := \displaystyle\sqrt{\kappa} + \frac{\delta_1}{2\sqrt{\kappa} r^2} + o(r^{-2}) \geq 0.
$$ 
Then, 
\begin{align*}
(\Delta_g r)^2 - 2|\nabla dr|^2 \geq &\ \left\{T+(n-2)t\right\}^2 - 2\left\{(n-2)T^2 + t^2\right\}\\
= &\ -(2n-5)T^2 + \{n^2-4n+2\}t^2 + 2(n-2)tT\\
\geq &\ -(2n-5)T^2 + \{n^2-4n+2\}t^2 + 2(n-2)t^2\\
= &\ -(2n-5)T^2 + \{n^2-2n-2\}t^2\\ 
= &\ (n-1)(n-3)\kappa + \left\{- (2n-5)\delta_1 + (n^2-2n-2)\delta_2\right\}\frac{1}{r^2} + o(r^{-2}).
\end{align*}
Hence, for $r \geq R_0$, we have 
\begin{align*}
&\ \frac{1}{4}(\Delta_g r)^2 - \frac{1}{2}|\nabla dr|^2 - \frac{1}{2}{\rm Ric}_g(\nabla r, \nabla r) + \frac{1}{4 r^2}\\
\geq &\ \frac{1}{4} \Big{[} (n-1)(n-3)\kappa + \frac{1}{r^2} \left\{- (2n-5)\delta_1 + (n^2-2n-2)\delta_2\right\} + o(r^{-2}) \Big{]}\\
&\ + \frac{n-1}{2} (\kappa + \frac{\delta_2}{r^2}) + \frac{1}{4 r^2}\\
= &\ \frac{(n-1)^2\kappa }{4} + \frac{1}{4 r^2}\left\{1 - (2n-5)\delta_1 + (n^2-4)\delta_2\right\} + o(r^{-2}).
\end{align*}
Since $1 - (2n-5)\delta_1 + (n^2-4)\delta_2 > 0$ and $\Delta_g r = (n-1) \kappa + O(r^{-2})$, 
we obtain the desired finiteness result similarly to Proof of Theorem~RSK. 
\end{proof}

Next, we give an interesting example of a noncompact manifold on which Theorem~1.1 holds. 
In the inequality (2) of Theorem~1.1, 
the term appearing in the integrand on the boundary is not the second fundamental form itself 
but its trace, that is, the mean curvature of the boundary. 
Hence, for example, Theorem~1.1 allows us to consider manifolds whose ends has a mixed structure, {\it expanding} and {\it shrinking}. 
It should be also remarked that the manifold given below 
is not rotationally symmetric, 
but our argument for constructing a nice test function is still applicable.   

Let $\xi$ be a unit Killing vector field on the unit standard $3$-sphere 
$(S^3(1),g_{S^3(1)})$ which satisfies $\ker d\pi = \R \cdot \xi$ for the Hopf fibering 
$\pi:S^3(1)\to \CP ^1$. We define a symmetric tensor $g_h$ on $S^3(1)$ 
by $g_h := g_{S^3(1)} - \omega_{\xi} \otimes \omega_{\xi}$, where $\omega_{\xi}$ stands for 
the $1$-form dual to $\xi$ with respect to $g_{S^3(1)}$. 
Using these two tensors $g_h$ and $\omega_{\xi}\otimes \omega_{\xi}$, we define a 
Riemmanian metric $g_{\mu\nu}$ on $\R^4$ by
$$
g_{\mu\nu} := dr^2 + \mu(r)^2g_h + \nu(r)^2\omega_{\xi} \otimes \omega_{\xi},
$$
where $r$ is the Euclidean distance to the origin ${\bf 0}$ of $\R ^4$, and 
$\mu$ and $\nu$ are smooth functions on $[0,\infty)$ satisfying
\begin{align*}
&\mu(0) = \nu(0) = 0, \quad \mu'(0) = \nu'(0) = 1,\\
&\mu > 0,\quad \nu > 0\qquad \mbox{on}\quad (0,\infty).
\end{align*}
We also choose $\mu$ and $\nu$ satisfying  
$$
\mu(r) = e^{r}, \quad \nu(r) = e^{-r}\qquad \mbox{for}\quad r \geq R_0, 
$$
where $R_0 > 0$ is some positive constant. 
\begin{prop}\label{Example}\ \ 
With these understandings, assume that $(- \Delta_{g_{\mu\nu}} + V)|_{C^{\infty}_c(\mathbb{R}^4)}$ is essentially self-adjoint 
on $L^2(\mathbb{R}^4, dv_{g_{\mu\nu}})$, 
and that $\sigma_{{\rm ess}}(- \Delta_{g_{\mu\nu}} + V) = [\frac{1}{4}, \infty)$. \\ 
$({\rm i})$\quad Assume that, there exists $R_0 > 0$ such that $V$ satisfies 
$$ 
V(x) \geq - \frac{1}{4r^2}\qquad {\rm for}\quad r \geq R_0. 
$$ 
Then, $\sigma_{{\rm disc}}(- \Delta + V)$ is finite. \\ 
$({\rm ii})$\quad Assume that, there exist $\delta > 0$ and $R_1 > 0$ such that $V$ satisfies 
$$ 
V(x) \leq - (1+\delta)\frac{1}{4r^2}\qquad {\rm for}\quad r \geq R_1. 
$$ 
Then, $\sigma_{{\rm disc}}(- \Delta + V)$ is infinite.  
\end{prop}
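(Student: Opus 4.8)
The plan is to observe that on the end $\{r \ge R_0\}$ the metric is the explicit multiply-warped product $g_{\mu\nu} = dr^2 + e^{2r}g_h + e^{-2r}\,\omega_\xi\otimes\omega_\xi$, compute the radial geometry by hand, and then run the argument exactly as for Theorem~\ref{Thm-H}, with $\inf\sigma_{\rm ess} = \frac14$ and with critical coefficient $\frac14$ in front of $r^{-2}$. Writing $r = {\rm dist}_{g_{\mu\nu}}(\cdot,{\bf 0})$, the level sets $\{r={\rm const}\}$ have second fundamental form (with respect to $\nabla r$) with principal curvatures $\mu'/\mu = 1$ in the two horizontal $g_h$-directions and $\nu'/\nu = -1$ in the vertical $\xi$-direction. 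Hence $\Delta_{g_{\mu\nu}}r = 2\cdot 1 + (-1) = 1$ and $|\nabla dr|^2 = 2\cdot 1^2 + (-1)^2 = 3$, and since $\Delta_{g_{\mu\nu}}r$ is constant in $r$ the Riccati identity $-\partial_r(\Delta_{g_{\mu\nu}}r) = |\nabla dr|^2 + {\rm Ric}_{g_{\mu\nu}}(\nabla r,\nabla r)$ forces ${\rm Ric}_{g_{\mu\nu}}(\nabla r,\nabla r) = -3$. Substituting these into the integrand of (2), the bracketed term collapses to $\frac{1}{4r^2} + \frac14(1)^2 - \frac12(3) - \frac12(-3) = \frac14 + \frac{1}{4r^2}$, reproducing the $\mathbb{H}^n$-type structure of Theorem~\ref{Thm-H}.

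For assertion~(i) I would fix $R \ge \max\{R_0,1\}$ large enough that both the model form of $g_{\mu\nu}$ and the bound $V \ge -\frac{1}{4r^2}$ hold on $E := \mathbb{R}^4 - B_R({\bf 0})$. Then $H_W = \Delta_{g_{\mu\nu}}r|_W = 1 \ge 1/R$, the outward normal exponential map from $W = \partial B_R({\bf 0})$ is a diffeomorphism onto $E$ (radial lines are geodesics there), and the boundary term $\frac12(\Delta_{g_{\mu\nu}}r - \frac1R)|_W = \frac12(1-\frac1R) \ge 0$ is non-negative. Theorem~\ref{Thm-UPLG} thus gives
$$
\int_E |\nabla u|^2\, dv_{g_{\mu\nu}} \ge \int_E \Big(\frac14 + \frac{1}{4r^2}\Big)u^2\, dv_{g_{\mu\nu}}\qquad (u \in C_c^\infty(\mathbb{R}^4)).
$$
Together with $V \ge -\frac{1}{4r^2}$ on $E$, this shows $-\Delta_{g_{\mu\nu}} + V - \frac14$ has non-negative Rayleigh quotient on $E$ with Neumann data, hence no eigenvalue below $\frac14$ there, while the compact piece $\overline{B_R({\bf 0})}$ contributes only finitely many. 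Neumann bracketing (domain monotonicity) then yields finiteness of $\sigma_{\rm disc}(-\Delta_{g_{\mu\nu}}+V)$, exactly as in Proof of Theorem~RSK-(i) and Theorem~\ref{Thm-H}-(i).

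For assertion~(ii), although $(\mathbb{R}^4, g_{\mu\nu})$ is not rotationally symmetric, a radial reduction still works. Since the test functions below are non-negative and radial while $V \le -(1+\delta)\frac{1}{4r^2}$ is dominated by a radial function, we may assume $V$ radial. For radial $\phi(r)$ one has $|\nabla\phi|^2 = |\phi'|^2$, and integration over $\mathbb{R}^4$ factors as $\int_{\mathbb{R}^4}(\cdots)\,dv_{g_{\mu\nu}} = \mathrm{Vol}(S^3(1))\int_0^\infty(\cdots)\,J\,dr$ with radial weight $J := \mu^2\nu = e^r$ on $\{r\ge R_0\}$, so the radial part of $-\Delta_{g_{\mu\nu}}$ equals $-\frac{d^2}{dr^2} - \frac{J'}{J}\frac{d}{dr}$ on $L^2((0,\infty);J\,dr)$. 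Applying the unitary $U:\psi\mapsto J^{1/2}\psi$ as in Proof of Theorem~RSK-(ii), with $J$ in the role of $h^{n-1}$, the transformed potential is $Q(r) = \frac12\frac{J''}{J} - \frac14\big(\frac{J'}{J}\big)^2 = \frac12 - \frac14 = \frac14$ for $r \ge R_0$. Hence the one-dimensional potential of $U\circ\big((-\Delta_{g_{\mu\nu}})|_{\rm radial}+V-\frac14\big)\circ U^{-1}$ is $Q+V-\frac14 = V \le -(1+\delta)\frac{1}{4r^2}$ for $r \ge \widetilde R := \max\{R_0,R_1\}$. Lemma~\ref{Sub} with $\widetilde\delta = \delta$ then produces disjointly supported $\varphi_i$ of negative one-dimensional Rayleigh quotient; setting $\phi_i := U^{-1}\varphi_i(r(\cdot))$ gives disjointly supported radial functions on $\mathbb{R}^4$ with $\int(|\nabla\phi_i|^2 + (V-\frac14)\phi_i^2)\,dv_{g_{\mu\nu}} < 0$, and the min-max principle gives infiniteness of $\sigma_{\rm disc}$ below $\frac14 = \inf\sigma_{\rm ess}$.

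The one genuinely new point, and the only place the non-round Berger-type fibers $\mu^2 g_h + \nu^2\,\omega_\xi\otimes\omega_\xi$ could interfere, is the reduction in~(ii): the main (if modest) obstacle is checking that a radial test function feels only the weight $J = \mu^2\nu$, independently of the fiber geometry, so that the problem genuinely collapses to the one-dimensional operator of Lemma~\ref{Sub}. Once this is granted, the identity $Q\equiv\frac14$ matching $\inf\sigma_{\rm ess}$ is immediate, and both assertions reduce to the already-established Theorem~\ref{Thm-UPLG}, Theorem~RSK and Theorem~\ref{Thm-H} machinery.
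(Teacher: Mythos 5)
Your proposal is correct and takes essentially the same route as the paper: the same geometric data $\Delta_{g_{\mu\nu}} r = 1$, $|\nabla dr|^2 = 3$, ${\rm Ric}_{g_{\mu\nu}}(\nabla r,\nabla r) = -3$ fed into inequality (2) of Theorem~1.1 with $R \geq \max\{1, R_0\}$ for assertion~(i), and for assertion~(ii) the same reduction to Lemma~2.1, since your unitary transform with weight $J = \mu^2\nu = e^r$ and transformed potential $Q \equiv \frac14$ produces exactly the paper's test function $\chi(r)\,r^{1/2}e^{-r/2}$. You merely spell out what the paper leaves implicit — the principal curvatures $\mu'/\mu, \mu'/\mu, \nu'/\nu$, the Riccati identity giving the Ricci term, and the fiber-volume factorization $dv_{g_{\mu\nu}} = \mu^2\nu\, dr\, dv_{S^3(1)}$ justifying the radial reduction despite the non-round Berger-type fibers — all of which check out.
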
 
\begin{proof} 
Under our assumption, we have for $r \geq R_0$
$$
\Delta_{g_{\mu\nu}} r = 1,\quad |\nabla dr|^2 = 3,\quad {\rm Ric}_{g_{\mu\nu}}(\nabla r,\nabla r) = -3.
$$
Hence, we see that $\sigma_{{\rm ess}}(-\Delta _{g_{\mu\nu}})=[\frac{1}{4}, \infty)$, 
and that the inequality $(2)$ in Theorem~$1.1$ becomes the following simple form:
\begin{align*}
&\ \int_{\R ^4 - B_R({\bf 0})} \Big{(} |\nabla u|^2 - \frac{1}{4} u^2 \Big{)}dv_{g_{\mu\nu}}\\
\geq &\ \int_{\R ^4-B_R({\bf 0})} \Big{\{} \frac{1}{4 r^2} + \frac{1}{4}(\Delta_{g_{\mu\nu}} r)^2 - \frac{1}{2}|\nabla d r|^2 
- \frac{1}{2} {\rm Ric}_{g_{\mu\nu}}(\nabla r, \nabla r) - \frac{1}{4} \Big{\}} u^2 dv_{g_{\mu\nu}} \\
&\ + \frac{1}{2} \int_{\partial B_R({\bf 0})} \Big{(} \Delta_{g_{\mu\nu}} r - \frac{1}{R} \Big{)} u^2 d\sigma_{g_{\mu\nu}}\\
= &\ \int_{\R ^4 - B_R({\bf 0})} \frac{1}{4 r^2} u^2 dv_{g_{\mu\nu}} + \frac{1}{2} \int_{\partial B_R({\bf 0})} 
\Big{(} 1 - \frac{1}{R} \Big{)} u^2 d\sigma_{g_{\mu\nu}}\\
\geq &\ \int_{\R ^4 - B_R({\bf 0})} \frac{1}{4 r^2} u^2 dv_{g_{\mu\nu}},
\end{align*}
when $R \geq \max \{1, R_0\}$.
Therefore, we get our desired assertion~(i) as in Proof of Theorem~\ref{Thm-H}. 
On the other hand, if we choose our test function $\varphi$ as 
$$ 
\varphi = \chi(r) r^{\frac{1}{2}} e^{-\frac{r}{2}}, 
$$ 
we also get our desired assertion~(ii) as in Proof of Theorem~\ref{Thm-H}. 
Here, $\chi(r)$ is same as that defined in Proof of Lemma~\ref{Sub}. 
\end{proof}

\newpage 

\bibliographystyle{amsbook}

\vspace{20mm} 

\end{document}